\documentclass[11pt,a4paper,twoside,final,biblatex]{scrartcl}
\usepackage{a4wide}
\usepackage{amsfonts}
\usepackage{amsmath}
\usepackage{amsthm}
\usepackage{mathtools}
\usepackage{amssymb}
\usepackage{array}
\usepackage{multicol}
\usepackage{wrapfig}
\usepackage{dsfont}
\usepackage[utf8]{inputenc}
\usepackage[T1]{fontenc}
\usepackage{caption}
\usepackage{subcaption}
\usepackage{enumerate} 
\usepackage{xifthen}
\usepackage[urlcolor=blue,colorlinks=false]{hyperref}
\usepackage{algorithm}
\usepackage{graphicx}
\usepackage{color}
\usepackage{pgf}

\usepackage{multirow}
\usepackage{multicol}
\usepackage{tabularx}
\usepackage{diagbox}
\usepackage{longtable}
\usepackage{listings} 
\usepackage{cleveref}
\usepackage{tikz}
\usetikzlibrary{math}
\usetikzlibrary{patterns}
\usepackage{graphicx}
\usepackage{wrapfig}
\usepackage{changes} 
\usepackage[export]{adjustbox}

\newcommand{\N}{\ensuremath{\mathbb{N}}}

\newcommand{\NZ}{\ensuremath{\mathbb{N}_{0}}}
\newcommand{\T}{\ensuremath{\mathbb{T}}}
\renewcommand{\S}{\ensuremath{\mathbb{S}}}

\newcommand{\Z}{\ensuremath{\mathbb{Z}}}

\newcommand{\R}{\ensuremath{\mathbb{R}}}
\newcommand{\C}{\ensuremath{\mathbb{C}}}

\newcommand{\ii}{\mathit{i}}
\newcommand{\e}{\textnormal{e}}

\newcommand{\eip}[1]{\textnormal{e}^{2\pi\ii{#1}}}

\renewcommand{\d}{\,\mathrm{d}}

\newcommand{\supp}{\operatorname{supp}}
\newcommand{\disp}{\operatorname{disp}}

\usepackage{algpseudocode}
\usepackage{algorithm}

\newtheorem{theorem}{Theorem}[section]
\newtheorem{lemma}[theorem]{Lemma}
\newtheorem{corollary}[theorem]{Corollary}
\newtheorem{proposition}[theorem]{Proposition}
\theoremstyle{definition}
\newtheorem{remark}[theorem]{Remark}
\newtheorem{definition}[theorem]{Definition}
\newtheorem{example}[theorem]{Example}

\numberwithin{equation}{section}
\numberwithin{table}{section}
\numberwithin{figure}{section}

\newcommand{\bend}{\hspace*{0ex} \hfill \hbox{\vrule height
    1.5ex\vbox{\hrule width 1.4ex \vskip 1.4ex\hrule  width 1.4ex}\vrule
    height 1.5ex}}

\long\def\symbolfootnote[#1]#2{\begingroup%
\def\thefootnote{\fnsymbol{footnote}}\footnote[#1]{#2}\endgroup}

\crefname{lemma}{Lemma}{Lemmata}
\crefname{definition}{Definition}{Definitions}
\crefname{theorem}{Theorem}{Theorems}
\crefname{corollary}{Corollary}{Corollaries}
\crefname{equation}{}{}
\crefname{remark}{Remark}{Remarks}
\crefname{algorithm}{Algorithm}{Algorithms}
\crefname{chapter}{Chapter}{Chapters}
\crefname{section}{Section}{Sections}
\crefname{table}{Table}{Tables}
\crefname{figure}{Figure}{Figures}
\crefname{example}{Example}{Examples}
\crefname{appendix}{Appendix}{Appendices}

\renewcommand{\thefootnote}{\fnsymbol{footnote}}

\allowdisplaybreaks

\title{Positive quadrature and mobile sampling of multivariate trigonometric polynomials}

\author{Stefan Kunis\footnote{University Osnabrueck, Germany. Email: \url{stefan.kunis@uos.de}}}

\begin{document}

\date{}

\maketitle

\begin{abstract}
The geometric properties of quadrature points in one and multiple dimensions are a classical topic in numerical analysis. Recently, generalized quadrature methods, where discrete points and weights are replaced by integration along curves, have attracted growing interest.
This paper focuses on positive generalized quadratures that are exact for multivariate trigonometric polynomials.
We derive upper bounds on the covering radius of such quadratures using sign-localized test functions, a technique that also extends naturally to algebraic polynomials on intervals, spherical polynomials, and hyperbolic cross trigonometric polynomials.
For lower bounds on the length of quadrature curves, we compare two recent approaches in the context of multivariate trigonometric polynomials.
The second part of the paper studies a well-known, geometrically simple curve that underlies rank-1 Korobov lattice rules for periodic functions. We prove that these curves are quasi-optimal with respect to multiple optimality criteria and demonstrate its adaptability to the 2-sphere.

\medskip
 	\noindent\emph{Key words and phrases}:
   Trigonometric polynomials, Sampling, Positive generalized quadrature, Covering radius. 
   
 	\medskip
 	\noindent\emph{2020 AMS Mathematics Subject Classification} : \text{
      41A55, 
      41A63, 
      42A10, 
      65D32. 
 	}
\end{abstract}

\section{Introduction}

Quadrature rules are a fundamental object of applied mathematics and stable sampling is an inherently connected topic of signal processing.
Classical schemes in both areas traditionally rely on point evaluations. However, dense data acquisition along curves has recently been established as a model in diverse fields such as magnetic resonance and particle imaging as well as drone and satellite scanning.
For bandlimited functions on $\R^d$, Unnikrishnan and Vetterli coined the term 'mobile sampling' in \cite{UnVe13}, with follow-up papers including \cite{JaNeRo21,RaUlZl23,JaMiVe24}, among others.
The polynomial exactness of generalized quadrature through integration along curves on the $d$-dimensional sphere and cube has been prominently studied in \cite{EhGr23,EhGrKa25,Li24,EhGr26,KiMi26} and \cite{BoDeVi17}, respectively.
Beyond measuring the density of a set in a domain, curves naturally possess length, and both quantities can be related to the degree of exactness and bandwidth.

For finite point sets (and more general sets), there is a relation between the density of points and their suitability for sampling or quadrature.
Motivated by the famous Shannon sampling theorem and strongly oversimplifying, there is \emph{a sufficient condition:} if the covering radius (largest hole) of a set is of size at most $c/n$, then any $n$-bandlimited function can be reconstructed from these samples, \emph{and a necessary condition:} if any $n$-bandlimited function can be reconstructed from samples, then the covering radius of these sampling points is at most $C/n$.

In the context of spherical harmonics, a nice discussion on the asymptotic notion of upper Beurling density can be found, e.g., in \cite{Ma07,MaPr14}.
Another line of density theorems considers interpolation problems (again see the same references for spherical harmonics), dating back to Ingham and recently experienced a renaissance motivated by super-resolution microscopy, see, e.g., \cite{Mo15,KuNaSt22,HoKu23}.
Here, sufficient conditions on the \emph{separation} of points allowing for interpolation have been proven via the Poisson summation formula using so-called Beurling--Selberg minorant functions, see, e.g., \cite[Thm.~3.8]{HoKu23}.
Recently, such functions have been named \emph{sign-localized} in the context of analytic number theory, see, e.g., \cite{CoDoGo24} and references therein.
Notably, sign-localized zonal polynomials were also employed by Reimer and Yudin for a necessary condition for positive quadrature on the sphere already in \cite[Thm.~6.21]{Re03}.

The purpose of the present paper is to prove upper bounds on the covering radius and lower bounds on the length of a curve if a positive generalized quadrature is exact for multivariate trigonometric polynomials. This is supplemented by a particularly simple curve which integrates multivariate trigonometric polynomials exactly and meets the geometric bounds up to an explicit, dimension-dependent constant. We mainly follow the approaches developed in \cite{Re03,BoDeVi17,EhGr23,EhGrKa25,EhGr26} on the sphere and the unit cube, use them in the geometrically simpler situation of the $d$-dimensional torus, and clarify their relations. In detail, our contributions are as follows:
\begin{itemize}
    \item Given any positive generalized quadrature, we establish explicit upper bounds on the covering radius with respect to the inverse degree of exactness in \cref{thm:deltaT}.
    We follow the proof by contradiction \cite[Chapter 6]{Re03}. Our main contribution is the construction of appropriate test functions which are band limited and have a specific sign change in spatial domain. This is in contrast to compactly supported test functions and approximation rate based estimates in \cite{EhGr23,EhGrKa25}. Via Tschakaloff's theorem it suffices to consider finite point sets here. 
    \item We give generalizations to algebraic polynomials on intervals and spherical polynomials by using explicit estimates on the zeros of orthogonal polynomials. Another generalization allows to bound the dispersion of a point set for positive quadrature of hyperbolic cross trigonometric polynomials, see \cref{sec:gen}.
    \item Following two approaches in \cite{EhGr23,EhGrKa25} and \cite{EhGr26}, we prove lower bounds on the length of a curve allowing for positive generalized quadrature in \cref{cor:Tschak2} and \cref{thm:Improv}. The first approach uses a purely geometric argument relating the covering radius and the length of a curve, also well known in the geometric traveling salesman problem \cite{BaClDu24}. The latter cleverly bounds the length of the curve segment when intersected with a box of appropriate small size \cite{EhGr26}.
    \item In \cref{sec:simple}, we consider a specific periodic straight line which has been used in different contexts already. This curve $\gamma$ allows for exact integration of multivariate trigonometric polynomials $f$ since the composition $f\circ\gamma$ is a univariate trigonometric polynomial, see \cref{thm:gamma1}.
    The very same construction for algebraic polynomials on the unit cube can be found in \cite{BoDeVi17}. We compute the exact covering radius and length of the curve in \cref{thm:gamma2}, which yields that the before proven necessary bounds are optimal up to a dimension dependent constant.
    \item Finally, we transfer this curve from the two-dimensional torus to the two-dimensional sphere in \cref{sec:sphere}, which yields a weighted version of \cite[Thm.~1.2]{EhGr23}. Also here, actual covering radius and length as well as the necessary bounds differ only by an explicit dimension dependent constant. The guiding principle being that a spherical harmonic in standard parametrization is a bivariate trigonometric polynomial with additional symmetry.
\end{itemize}

\section{Preliminaries}

We let $\T^d\simeq [0,1)^d\simeq [-\frac12,\frac12)^d$ denote the $d$-dimensional torus (unit-cube with opposing sides identified) and consider multivariate trigonometric polynomials $f\colon\T^d\to\C$,
\begin{align*}
    f(x)=\sum_{\|k\|_\infty\le n} c_k \eip{kx}
    =\sum_{k_1=-n}^n\dots\sum_{k_d=-n}^n c_{k_1,\hdots,k_d} \eip{k_1x_1+k_2x_2+\hdots+k_dx_d},
\end{align*}
denoted by $f\in \Pi_n^d$.
For an arbitrary set $Y\subset\T^d\simeq[0,1)^d$, we call
\begin{align*}
  Q[f]=\int_Y f(x)\d \sigma(x)  
\end{align*}
a \emph{generalized quadrature}. We note in passing that the measure $\sigma$ might include an additional non-negative weight. We call this quadrature \emph{exact of degree $n$} if
\begin{align*}
   \int_Y f(x)\d \sigma(x) = \int_{\T^d} f(x)\d x
\end{align*}
for all $f\in \Pi_n^d$.

We mainly consider classical quadrature, where $Y$ is a finite set of points, and integration along a curve. Two geometric quantities which describe the complexity of such sets are the covering radius, defined for all sets, and the length of a curve.
The covering radius (with respect to the wrap-around infinity norm) of a set $Y$ is given by
 \begin{align*}
     \delta=\delta_Y=\sup_{x\in\T^d} \inf_{y\in Y} |y-x|_\infty,\qquad
     |x-y|_\infty=\inf_{z\in\Z^d}\|x-y+z\|_\infty.
 \end{align*}
Now, let $\gamma\colon[0,1]\to\T^d\simeq [0,1)^d$ be a continuous, piecewise differentiable closed curve with at most finitely many self-intersections. We denote its trajectory and length by
 \begin{align*}
    \Gamma=\gamma([0,1]),\qquad
    \ell(\gamma)=\int_{\gamma}\d\sigma(x):=\int_0^1 \|\dot\gamma(t)\|_2\d t,
 \end{align*}
 respectively.

\section{Necessary bounds on covering radius and length}
In this section, we prove an upper bound on the covering radius for positive quadrature rules by constructing a sign-localized trigonometric polynomial with positive integral as test function.
If the covering radius would be too large, we can translate this test function such that the quadrature rule samples non-positive values only and hence cannot be exact.
The test functions as well as their Fourier coefficients are illustrated in Figures \ref{fig:1d} and \ref{fig:2d} for $d=1$ and $d=2$, respectively. Their properties rely on the following \cref{lem:DFn}, which collects standard results for the multivariate versions of the Dirichlet and Fejer kernels.
As a corollary, we also show by \cref{lem:deltaL} and a simple argument relying on Tschakaloff's theorem, see, e.g., \cite[Thm.~1.24]{Sc17}, that the length of a curve in a generalized quadrature is lower bounded.
In the last part of this section, we transfer the result to algebraic polynomials on the interval $[-1,1]$, to spherical harmonics on the sphere $\S^d$, and to a dispersion bound for hyperbolic cross trigonometric polynomials.

\begin{figure}[h!]
 \includegraphics[width=0.99\textwidth]{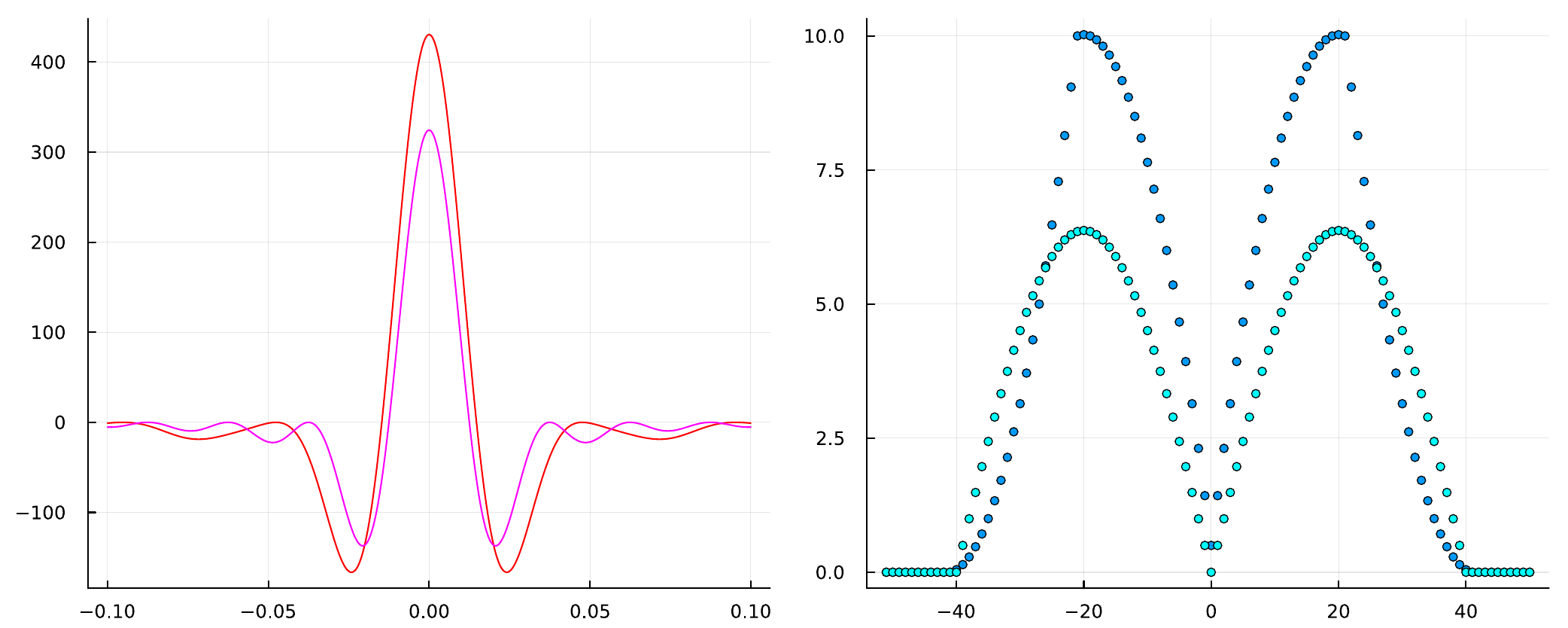}
 \caption{Test function $g(x)=(D_n(x)- (n+\frac{1}{2}))F_n(x)$ for $n=20$ and $d=1$ (left, red) and its Fourier coefficients (right, blue). The test function of Reimer and Yudin, see \cref{rem:Improv}, as well as its Fourier coefficients are shown in magenta and cyan, respectively.}\label{fig:1d}
\end{figure}

\begin{figure}[h!]
 \includegraphics[width=0.99\textwidth]{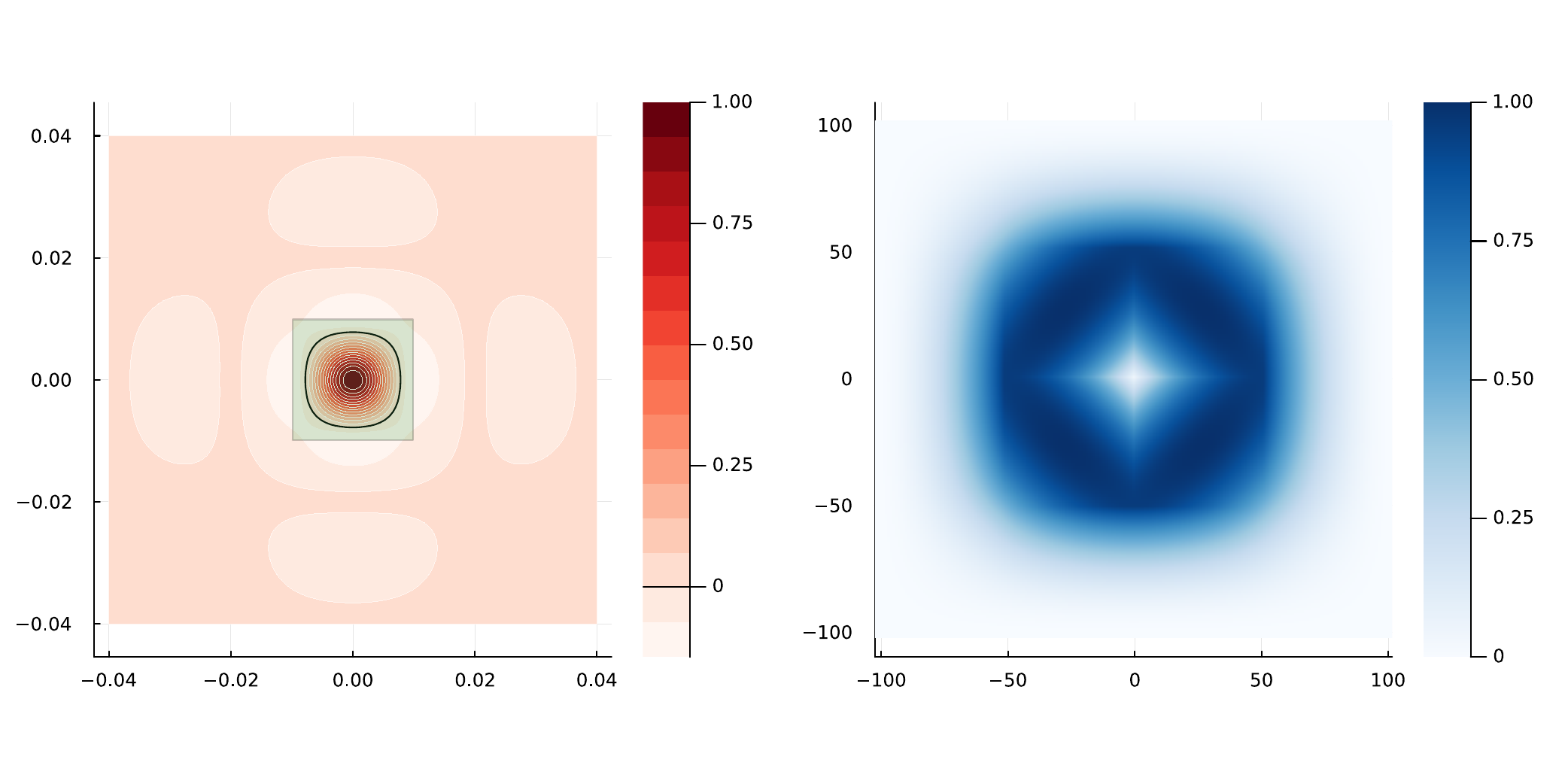}
 \caption{Test function $g(x)=(D_n(x)- (n+\frac{1}{2})^2)F_n(x)$ for $n=50$ and $d=2$ (left, red) and its Fourier coefficients (right, blue). The zero crossing of $g$ is shown as black line, the shaded region has side-length $2/(2n+1)$.}\label{fig:2d}
\end{figure}

\subsection{Upper bounds on the covering radius}
We start in \cref{lem:DFn} by some standard identities and estimates which are included to keep the paper self contained.
\cref{thm:deltaT} gives an upper bound on the covering radius by constructing a simple sign-localized trigonometric polynomial with positive integral and vanishing quadrature sum.
We also include some remarks on possible improvements and variants.
\begin{lemma}\label{lem:DFn}
    Let $d,n\in\N$, then the Dirichlet and Fejer kernel, given by
    \begin{align*}
        D_n(x)&=\begin{cases}\frac{\sin(2n+1)\pi x}{\sin\pi x}, & x\ne 0,\\
                2n+1, & x=0,\end{cases}\qquad
        &D_n(x_1,\hdots,x_d)&=D_n(x_1)\cdot\hdots\cdot D_n(x_d),\\
        F_n(x)&=\begin{cases}
           \frac{1}{n+1}\left(\frac{\sin(n+1)\pi x}{\sin\pi x}\right)^2, & x\ne 0,\\
           n+1, & x=0,\end{cases}\qquad
        &F_n(x_1,\hdots,x_d)&=F_n(x_1)\cdot\hdots\cdot F_n(x_d),
    \end{align*}
    fulfil $D_n,F_n\in\Pi_n^d$ as well as for any $C\in (0,n)$ and $|x|_\infty\ge C/(2n+1)$ the bound
    \begin{align*}
     D_n(x)\le \frac{(2n+1)^{d}}{2C},
    \end{align*}
    and the identities
    \begin{align*}
      \int_{\T^d} D_n(x) F_n(x) \d x=(n+1)^d, \qquad \int_{\T^d} F_n(x)\d x = 1.
    \end{align*}
\end{lemma}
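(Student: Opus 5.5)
The plan is to work entirely with the Fourier expansions of the two kernels in one variable and then pass to $d$ dimensions via the tensor-product structure. Every claim except the pointwise bound becomes a statement about Fourier coefficients.

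\textbf{Step 1: membership in $\Pi_n^d$.} First I verify the univariate representations
\begin{align*}
  D_n(x)=\sum_{k=-n}^n \eip{kx},\qquad F_n(x)=\sum_{k=-n}^n\Bigl(1-\frac{|k|}{n+1}\Bigr)\eip{kx}.
\end{align*}
For $D_n$, I sum the geometric series $\e^{-2\pi\ii nx}\sum_{j=0}^{2n}\eip{jx}$. Multiplying numerator and denominator by $\e^{-\pi\ii x}$ gives $\sin((2n+1)\pi x)/\sin(\pi x)$.

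For $F_n$, I write $\frac{\sin((n+1)\pi x)}{\sin\pi x}=\e^{-\pi\ii n x}\sum_{j=0}^{n}\eip{jx}$. Taking the modulus squared gives
\begin{align*}
  \Bigl|\sum_{j=0}^n\eip{jx}\Bigr|^2=\sum_{j,l=0}^n \eip{(j-l)x}.
\end{align*}
The frequency $k=j-l$ occurs exactly $n+1-|k|$ times. Dividing by $n+1$ yields the stated coefficients.

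The values at $x=0$ agree with these sums, namely $2n+1$ and $\sum_k(1-|k|/(n+1))=n+1$. Since the multivariate kernels are products of univariate ones in separate variables, their coefficients are tensor products supported in $\|k\|_\infty\le n$. Hence $D_n,F_n\in\Pi_n^d$.

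\textbf{Step 2: the identities.} By orthonormality of $\eip{kx}$ on $\T^d$, $\int_{\T^d}F_n\d x$ is the zeroth coefficient, which equals $1^d=1$. For the product, Parseval's identity gives $\int D_nF_n=\sum_k \hat D_n(k)\hat F_n(-k)$; this integral is real and all coefficients are real and even. The sum factorizes over coordinates into
\begin{align*}
  \Bigl(\sum_{k=-n}^n\Bigl(1-\frac{|k|}{n+1}\Bigr)\Bigr)^d=\Bigl(2n+1-\frac{n(n+1)}{n+1}\Bigr)^d=(n+1)^d.
\end{align*}

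\textbf{Step 3: the pointwise bound.} This is the only step requiring an estimate rather than an identity. Take $|x|_\infty\ge C/(2n+1)$ and pick a coordinate $j$ with $|x_j|\ge C/(2n+1)$, where $|x_j|$ is the wrap-around distance, so $x_j$ can be taken in $[-\frac12,\frac12)$.

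By periodicity and evenness of $|D_n|$, together with Jordan's inequality $\sin(\pi t)\ge 2t$ on $[0,\frac12]$, I get
\begin{align*}
  |D_n(x_j)|\le\frac{1}{|\sin\pi x_j|}\le\frac{1}{2|x_j|}\le\frac{2n+1}{2C}.
\end{align*}
For the remaining coordinates I use the trivial bound $|D_n(x_i)|\le 2n+1$, which follows from Step 1 as a sum of $2n+1$ unimodular terms. Therefore
\begin{align*}
  D_n(x)\le|D_n(x)|\le(2n+1)^{d-1}\frac{2n+1}{2C}=\frac{(2n+1)^d}{2C}.
\end{align*}
The hypothesis $C<n$ only guarantees that the region $\{|x|_\infty\ge C/(2n+1)\}$ is nonempty in $\T^d$; the estimate itself needs no further condition.

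The main subtlety is handling the wrap-around norm correctly, so that the chosen representative satisfies $|x_j|\le\frac12$ and Jordan's inequality applies. Everything else is routine bookkeeping.
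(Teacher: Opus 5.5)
Your proof is correct and follows essentially the paper's route. The integral identities come from the Fourier coefficients: the paper phrases $\int D_nF_n$ as the reproducing property of $D_n$ returning $F_n(0)=n+1$, which is exactly your Parseval sum. The decay bound comes from $|\sin|\le 1$ and $\sin\pi t\ge 2t$ in one coordinate, together with the trivial bound $2n+1$ in the others.

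Working with $|D_n(x)|$ as you do is in fact slightly more careful than the paper's displayed chain. Its intermediate step $D_n(x)\le(2n+1)^{d-1}D_n(x_1)$ fails whenever $D_n(x_1)<0$ and the remaining coordinates are not all zero; the final bound still holds by your argument.
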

\begin{proof}
 Without loss of generality, let $x_1=|x|_{\infty}$ and use $\sin(2n+1)\pi x_1\le 1$ and $\sin\pi x_1\ge 2x_1\ge 2C/(2n+1)$ in
 \begin{align*}
  D_n(x)\le (2n+1)^{d-1}\frac{\sin(2n+1)\pi x_1}{\sin\pi x_1}
        \le \frac{(2n+1)^{d-1}}{2 x_1}
        \le \frac{(2n+1)^d}{2 C}.
    \end{align*}
 Now note that it suffices to consider the integrals for $d=1$.
 The first identity for the Fejer kernel follows by considering the integral as convolution of a trigonometric polynomial (the Fejer kernel) with the reproducing kernel of the space (the Dirichlet kernel), this evaluates the integral to $F_n(0)$.
 The second integral is simply the zeroth Fourier coefficient of the Fejer kernel - which is known to be one.
\end{proof}

\begin{remark}\label{rem:DFn}
 The bound on the Dirichlet kernel can be slightly improved:
 \begin{enumerate}
     \item For $d=1$ and $C=1$, we bound the main-lobe as well: The so-called 'full width half maximum' in the signal processing community asks for $x$ such that $D_n(x)= (2n+1)/2$ holds. We make the ansatz
     \begin{align*}
       D_n\left(\frac{c}{2n+1}\right)=\frac{2n+1}2
     \end{align*}
     which allows to deduce
     \begin{align*}
         2 \sin c\pi = (2n+1)\sin \frac{c\pi}{2n+1} \overset{n\to\infty}\longrightarrow c\pi
     \end{align*}
     and thus $c\approx 0.603$. In summary, there is $n_0\in\N$ with $D_n(x)\le (2n+1)/2$ for all $n\ge n_0$ and $0.61/(2n+1)\le |x|\le 1/2$.
     \item For $d=2$ and $C=2$ the bound remains valid also for $|x|_\infty\ge 1/(2n+1)$ and this can be seen as follows: If both $|x_1|\ge 1/(2n+1)$ and $|x_2|\ge 1/(2n+1)$, then we can use the bound from the above proof for both factors. If $|x_1|\in [1/(2n+1),2/(2n+1))$ and $|x_2|< 1/(2n+1)$, or vice versa, then $D_n(x)\le 0$ and the bound is trivially fulfilled.
 \end{enumerate}
\end{remark}

\begin{theorem}\label{thm:deltaT}
 If a quadrature rule $Q[f]=\sum_j w_j f(x_j)$ with positive weights $w_j>0$ is exact for $f\in\Pi_{2n}^d$, then the covering radius of the quadrature points is bounded from above by
 \begin{align*}
     \delta=\max_{x\in\T^d} \min_{j} |x_j-x|_\infty \le \frac{2^{d-1}}{2n+1}.
 \end{align*}
\end{theorem}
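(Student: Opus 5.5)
We argue by contradiction, following the strategy of \cite[Chapter 6]{Re03}. Suppose $\delta > 2^{d-1}/(2n+1)$. Then there is a point $x^*\in\T^d$ with $|x_j-x^*|_\infty > 2^{d-1}/(2n+1)$ for all quadrature nodes $x_j$. We use the sign-localized test function suggested by Figures \ref{fig:1d} and \ref{fig:2d},
\begin{align*}
  g(x)=\bigl(D_n(x)-(n+\tfrac12)^d\bigr)F_n(x),
\end{align*}
and its translate $g(\cdot-x^*)$. Since $D_n,F_n\in\Pi_n^d$ by \cref{lem:DFn}, the product lies in $\Pi_{2n}^d$, and so do the translate and the constant term. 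The quadrature rule is therefore exact for $g(\cdot - x^*)$.

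\textbf{Positive integral.} By the identities in \cref{lem:DFn} and translation invariance,
\begin{align*}
  \int_{\T^d} g(x-x^*)\d x=(n+1)^d-(n+\tfrac12)^d>0.
\end{align*}

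\textbf{Non-positive quadrature sum.} Apply the bound of \cref{lem:DFn} with $C=2^{d-1}$. This requires $C<n$; the finitely many remaining small $n$ are either handled separately or are vacuous, because the claimed bound then exceeds the trivial bound $\delta\le 1/2$. For $|x|_\infty\ge 2^{d-1}/(2n+1)$ the lemma gives
\begin{align*}
  D_n(x)\le \frac{(2n+1)^d}{2\cdot 2^{d-1}}=\Bigl(\frac{2n+1}{2}\Bigr)^d=(n+\tfrac12)^d .
\end{align*}
Since $F_n\ge0$, it follows that $g(x_j-x^*)\le 0$ for every node. Because all weights are positive,
\begin{align*}
  Q[g(\cdot-x^*)]=\sum_j w_j\, g(x_j-x^*)\le 0 .
\end{align*}
This contradicts exactness, since the exact integral is strictly positive. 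Hence $\delta\le 2^{d-1}/(2n+1)$.

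\textbf{Main points to check.} The key step is the choice of the constant $(n+\frac12)^d$. It must be small enough that the integral $(n+1)^d-(n+\frac12)^d$ stays positive, yet large enough to dominate $D_n$ outside the cube of radius $2^{d-1}/(2n+1)$. The constant $C=2^{d-1}$ is exactly calibrated to make both hold. The remaining care points are the edge cases. First, the strict versus non-strict inequality at the boundary: we assume strict $>$ in the contradiction hypothesis, so the nodes lie in the region where the bound applies. Second, the restriction $C<n$ in \cref{lem:DFn}. Inspecting that lemma's proof, the restriction is only needed to ensure $2C/(2n+1)\le 1$, i.e.\ that the relevant region is nonempty, so it causes no real difficulty.
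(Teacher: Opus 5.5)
Your proposal is correct and matches the paper's proof: the same test function $g=(D_n-(n+\frac12)^d)F_n$, the same choice $C=2^{d-1}$ in \cref{lem:DFn}, and the same contradiction between a non-positive quadrature sum and the positive integral $(n+1)^d-(n+\frac12)^d$. You are right to worry about the hypothesis $C<n$, which the paper silently ignores, and your observation that the lemma's proof only needs $\sin\pi x_1\ge 2x_1$ settles it; the ``vacuous'' argument alone would not cover $n=2^{d-1}$, though, since there $2^{d-1}/(2n+1)=n/(2n+1)<\frac12$.
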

\begin{proof}
 If we assume on the contrary that $\delta>2^{d-1}/(2n+1)$, then there exist some point $x^*$ with $|x_j-x^*|_\infty>2^{d-1}/(2n+1)$ for all $j$.
 By translation invariance, we assume without loss of generality that $x^*=0$. 
 
 Now we consider the trigonometric polynomial $g(x)=(D_n(x)- (n+\frac{1}{2})^d)F_n(x)$ of degree $2n$.
 Using \cref{lem:DFn} with $C=2^{d-1}$, we estimate
 \begin{align*}
  D_n(x_j)-\left(n+\frac{1}{2}\right)^d \le  \frac{(2n+1)^d}{2^d}- \left(n+\frac{1}{2}\right)^d = 0.
 \end{align*}
 Since $F_n(x)\ge 0$ and $w_j>0$ this also implies
 \begin{align*}
  \sum_j w_j g(x_j) \le 0.
 \end{align*}
 On the other hand, we have
 \begin{align*}
     \int_{\T^d} g(x)\d x
     = \int_{\T^d} D_n(x)F_n(x)\d x - \left(n+\frac{1}{2}\right)^d \int_{\T^d} F_n(x)\d x
     = (n+1)^d - \left(n+\frac{1}{2}\right)^d >0.
 \end{align*}
 This contradicts that the quadrature rule is exact for $\Pi_{2n}^d$.
\end{proof}

\begin{remark}\label{rem:Improv}
We would like to discuss the following improvements and variants:
\begin{enumerate}
  \item For $d=1$, \cref{rem:DFn}i) allows to improve \cref{thm:deltaT} to $\delta\le 0.61/(2n+1)$. An even better result is due to Reimer and Yudin \cite[Thm.~6.21]{Re03}:
 If a quadrature rule with positive weights is exact for $\Pi_{2n-1}$, then the covering radius fulfils
 \begin{align*}
     \delta\le \frac{1}{4n}.
 \end{align*}
 In their work, the used test function is a trigonometric polynomial of degree $2n-1$, given by
 \begin{align*}
  \frac{\left(\cos(2\pi n x\right)^2}{\cos 2\pi x - \cos \frac{\pi}{2n}}=g(x)
  \begin{cases}
   \ge 0, & |x|\le \frac{1}{4n},\\
   \le 0, & \frac{1}{4n}\le|x|\le\frac12,
  \end{cases}
 \end{align*}
 having vanishing integral due to orthogonality. The squared cosine is non-negative with double zeroes of which the two near the origin are made simple by the denominator, see also \cref{fig:1d}.
 This result is sharp since it is well known that $2n$ equispaced points and weights $w_j=1/(2n)$ yield a quadrature rule which is exact for $\Pi_{2n-1}$. The covering radius of these points is exactly $1/(4n)$.
 Finally note that we were not able to generalize this test function to $d> 1$ directly.
\item The Dirichlet kernel in \cref{thm:deltaT} might be replaced by a stronger localized kernel. 
By increasing the localization with increasing space dimension, one might be able to get rid of the exponential dependence of the constant on the space dimension $d$. A constant independent of the space dimension seems not reasonable to expect, see, e.g., \cite{CaElGoKe22,CoDoGo24}.
 \item Ehler and Gröchenig \cite[Thm.~2.2]{EhGr23} built upon results in \cite{BaChCoGiSeTr14,BrEhGr18} and we exemplify their argument for comparison for $d=1$.
 Here, the test function $f$ is smooth and compactly supported and we assume without loss of generality $x_j\not\in\supp f\subset [-\delta,\delta]$ such that $\sum_j w_j f(x_j)=0$.
 The reasonable test function
 \begin{align*}
     f(x)
     =\delta^2+\sum_{k\ne 0} \frac{\sin^2(\pi\delta k)}{\pi^2 k^2} \eip{kx}
     =\begin{cases} \delta-|x|,& |x|<\delta,\\
     0,&\text{otherwise},
     \end{cases}
 \end{align*}
 allows with $r_n(x)=\sum_{|k|>n} \frac{\sin^2(\pi\delta k)}{\pi^2 k^2} \eip{kx}$ for the representation
 \begin{align*}
   \delta^2
   &= \int_{\T} f(x)\d x - \sum_j w_j f(x_j)
   =\int_{\T} r_n(x)\d x - \sum_j w_j r_n(x_j)\\
   &= \sum_{|k|>n} \left(\sum_j w_j \eip{kx_j}\right) \frac{\sin^2(\pi\delta k)}
   {\pi^2 k^2}.
 \end{align*}
 The first appearing sum as well as the second integral vanish, the switch from $f$ to $r_n$ is due to the exactness of the quadrature.
 In total, this however only yields the suboptimal estimate $\delta^2\le C/n$ when bounding the sum over $j$ and the $\sin^2$-term each by one.
 An improvement to $\delta^2\le C/n^2$ (with some hard to access constant $C$) might be achieved by \cite[Thm.~2.12]{BaChCoGiSeTr14}.
 The results relies on a pointwise estimate \cite[Lemma 8]{BaChCoGiSeTr14}, which can be achieved via Abel summation in the above situation.
 A more involved part of the full proof of \cite[Thm.~2.12]{BaChCoGiSeTr14} is \cite[Lemma 2.9]{BaChCoGiSeTr14}.
 We note that this lemma already yields the wanted relation between covering radius and degree by setting $s<\delta$ and choosing $y$ such that the quadrature vanishes.
 Finally, the proof of this lemma heavily relies on approximation rates for bandlimited functions minorizing and majorizing characteric functions, cf.~\cite{CoGiTr11}. We note that the involved minorizing function might be considered sign-localized and \cref{thm:deltaT} uses just the sign change but no approximation rate for such a function.
\end{enumerate}
\end{remark}

\subsection{Variants on the interval, the sphere, and the hyperbolic cross}\label{sec:gen}
 The reasoning above easily translates to other settings. We exemplify this in the following
 \begin{itemize}
     \item for the interval $[-1,1]$ showing that the covering radius of nodes is $O(1/n)$ for any positive quadrature rule of exactness $n$,
     \item for the unit sphere, making the results of \cite{EhGr23} more explicit, and
     \item for the hyperbolic cross, where we upper bound the dispersion of quadrature nodes. 
 \end{itemize}
\begin{remark}[Algebraic polynomials]
 The Legendre polynomial $P_n\colon[-1,1]\to\R$ is given by
 \begin{align*}
     P_n(x)=\frac{1}{2^n n!}\frac{\d^n}{\d x^n}\left(\left(x^2-1\right)^n\right)
 \end{align*}
 and has simple zeros $t_\ell\in(-1,1)$, $\ell=1,\hdots,n$.
 Now consider the polynomials
 \begin{align*}
  g_\ell(x)=\frac{(P_n(x))^2}{(x-t_\ell)(x-t_{\ell+1})},\qquad \ell=1,\hdots,n-1,
 \end{align*}
 of degree $2n-2$, which by orthogonality and considering their double and simple zeros fulfil
 \begin{align}\label{eq:PnOrth}
     \int_{-1}^1 g_\ell(x) \d x = 0\quad\text{and}\quad
     g_\ell(x) \begin{cases}< 0,& x\in (t_\ell,t_{\ell+1}),\\
                            > 0,& x\in \R\setminus \left((t_\ell,t_{\ell+1})\cup \{t_\ell:\ell=1,\hdots,n\}\right),
                            \end{cases}
 \end{align}
 respectively.
 If a quadrature rule with positive weights 
 \begin{align}\label{eq:PnEx}
     \sum_{j} w_j f(x_j) = \int_{-1}^1 f(x)\d x
 \end{align}
 is exact of degree $2n-2$, then every closed interval $[t_\ell,t_{\ell+1}]$ must contain at least one quadrature points $x_j$, which can be seen as follows:
 First note that the number of quadrature points is at least $n-1$.
 Now, fix $\ell$ and \emph{assume} that $[t_\ell,t_{\ell+1}]$ does not contain any of the quadrature points $x_j$, then we have $g_\ell(x_j)>0$ for at least one quadrature point and hence
 $    \sum_{j} w_j g_\ell(x_j) > 0$
 contradicting $\sum_{j} w_j g_\ell(x_j)=0$ by \eqref{eq:PnOrth} and \eqref{eq:PnEx}.
 Moreover, expressing the zeros as $t_\ell=\cos\theta_\ell$ allows for the estimate, see, e.g., \cite{Sz36},
 \begin{align*}
     \frac{2\ell-1}{2n+1}\pi < \theta_\ell < \frac{2\ell}{2n+1}\pi.
 \end{align*}
 Together with the above argument and the mean value theorem for the arccos-function, whose derivative is lower bounded by 1, this yields
 \begin{align*}
     \sup_{x\in [-1,1]} \inf_j |x_j-x|
     \le \sup_{x\in [-1,1]} \inf_j|\arccos x_j - \arccos x| \le \frac{3\pi}{2n+1}
 \end{align*}
 for any quadrature rule with positive weights and degree of exactness $2n-2$.
\end{remark}

\begin{remark}[Explicit bound on the sphere]
Reimer and Yudin construct a zonal spherical polynomial 
which is positive around a fixed point on the sphere and turns non-positive at a specified distance.
This allows to prove the following, see \cite[Thm.~6.21]{Re03}:
If a quadrature rule on the unit sphere $\S^{d-1}=\{x\in\R^d:\|x\|_2=1\}$ is exact for polynomials of degree $2n$, then 
\begin{align*}
 \delta=\max_{x\in\S^{d-1}}\min_j \arccos\langle x_j,x\rangle \le \theta_{n,1}
 \le\frac{3d}{2n}
\end{align*}
where $\theta_{n,1}$ denotes the smallest positive root of the 'trigonometric' Gegenbauer polynomial $C_n^{(d-2)/2}(\cos\theta)$ and the last bound holds for $d\ge 1$ and $n\ge 3$ as straightforward simplification of \cite[Thm.~1.1]{Ni19}. This estimate remains true for curves on the sphere and allows to lower bound the length of a curve by a packing argument as in \cite[second part of the proof of Thm.~2.2]{EhGr23}.
Following the latter, we obtain $\ell(\gamma)\ge\delta \omega(\S^{d-1})/\omega(B_{4\delta})\ge c_d n^{d-2}$, where $B_{4\delta}=\{x\in\S^{d-1}:\arccos\langle e_d,x\rangle\le 4\delta\}$ is a spherical cap (around the north pole $e_d$) and the constant $c_d$ decays superexponentially with $d$.
The recent work \cite{EhGr26} gives the same order but a better (larger) constant $c_d$. We compare these two approaches on the torus in the subsequent \cref{sec:length}.
\end{remark}

\begin{remark}[Hyperbolic cross trigonometric polynomials and dispersion]
 For notational convenience let $d=2$. The decay estimate in \cref{lem:DFn} can be slightly generalized for $p,q\in\N$ to
 \begin{align*}
   D_p(x) D_q(y)\le \frac{(2p+1)(2q+1)}{4},\quad |x|\in \left[\frac{1}{2p+1},\frac12\right] \text{ or } |y|\in \left[\frac{1}{2q+1},\frac12\right].
 \end{align*}
 We consider the test functions
 \begin{align*}
     g_{p,q}(x,y)=\left(D_p(x) D_q(y) - \left(p+\frac12\right)\left(q+\frac12\right)\right)F_p(x)F_q(y),
 \end{align*}
 which fulfil
 \begin{align*}
  0&\ge g_{p,q}(x,y)\quad \text{for}\quad (x,y)\in \T^2\setminus
  \left[-\frac{1}{2p+1},\frac{1}{2p+1}\right]\times \left[-\frac{1}{2q+1},\frac{1}{2q+1}\right]  \quad \text{and}\\
  0&<(p+1)(q+1)-\left(p+\frac12\right)\left(q+\frac12\right) = \int_{\T^2} g_{p,q}(x,y) \d x \d y.
 \end{align*}
 Now the hyperbolic cross of order $n$ is defined by
 \begin{align*}
     H_n=\left\{(k_1,k_2)\in\Z^2:(1+|k_1|)(1+|k_2|) \le n+1\right\}
 \end{align*}
 and we call $f\colon\T^2\to\C$, $f(x)=\sum_{k\in H_n} c_k \eip{kx}$,
 a hyperbolic cross trigonometric polynomial of degree $n$. In particular, the test functions $g_{p,q}$ are hyperbolic cross trigonometric polynomials of order $4n+3$ whenever $(1+2p)(1+2q)\le 4n+4$.
 If a quadrature formula $Q[f]=\sum_j w_j f(x_j)$ with positive weights $w_j>0$ and node set $X=\{x_j\in\T^2:j=1,\hdots,m\}$ is exact for hyperbolic cross trigonometric polynomials of order $4n+3$, then necessarily the dispersion (maximal volume of empty box) fulfils
 \begin{align*}
     \disp(X)=\sup_{[a_1,a_2)\times[b_1,b_2)\cap X=\emptyset} (b_1-a_1)(b_2-a_2) \le \frac{3}{n}.
 \end{align*}
 Assuming the contrary implies together with translation invariance and boundedness of $\T^2\simeq [-\frac12,\frac12)^2$ that $[-a,a)\times[-b,b)\cap X=\emptyset$ with $4ab\ge 3/n$ for some $a,b\in(0,\frac{1}{2}]$. Then choosing
 \begin{align*}
     p=\left\lceil\frac{1}{2a}-\frac12\right\rceil,\qquad
     q=\left\lceil\frac{1}{2b}-\frac12\right\rceil
 \end{align*}
 implies
 $
  (1+2p)(1+2q)\le \left(\frac{1}{a}+2\right)\left(\frac{1}{b}+2\right)\le 4n+4
 $ 
 and $g_{p,q}(x,y)\le 0$ for $(x,y)\in\T^2\setminus[-a,a]\times[-b,b]$. Hence $Q[g_{p,q}]\le 0$, which contradicts the exactness of the quadrature rule.
\end{remark}

\subsection{Lower bounds on the length of a curve}\label{sec:length}

A standard packing argument allows to relate the covering radius and the length of a curve, see, e.g., \cite[last part of the proof of Thm.~2.2]{EhGr23}. We note in passing that this relation is also well known for the traveling salesman problem in the unit cube, see \cite{BaClDu24} for a recent survey, and that we are not aware of any relation of this type which does not involve an exponentially small constant with increasing space dimension.

\begin{lemma}\label{lem:deltaL}
 Let $d\in\N$ and $\gamma\colon[0,1]\to\T^d$ be a curve as above with covering radius $\delta=\delta_{\Gamma}\le 1/8$, then
 \begin{align*}
    \ell(\gamma)\ge 2^{1-3d} \delta^{1-d}.
 \end{align*}
\end{lemma}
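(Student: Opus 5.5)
We argue by packing: place many far-apart points in the torus; the curve must come close to each of them, and it has to travel between them.

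\emph{Step 1: a separated grid.} Set $m=\lfloor 1/(4\delta)\rfloor$. Since $\delta\le 1/8$, we have $m\ge 2$ and $m\ge 1/(8\delta)$. Take the grid
\begin{align*}
  P=\{k/m: k\in\{0,\hdots,m-1\}^d\}\subset\T^d .
\end{align*}
It has $m^d$ points, and distinct points are at wrap-around $\infty$-distance at least $1/m\ge 4\delta$.

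\emph{Step 2: the curve enters every cube.} By the definition of the covering radius, for each $p\in P$ the closed cube $B_p=\{x:|x-p|_\infty\le\delta\}$ meets $\Gamma$; if needed, replace $\delta$ by $\delta+\varepsilon$ and let $\varepsilon\to0$. The cubes $B_p$ are pairwise disjoint. Moreover, any two of them are separated by a gap of at least $1/m-2\delta\ge 2\delta$ in $\infty$-distance.

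\emph{Step 3: length between visits.} Choose parameters $t_p$ with $\gamma(t_p)\in B_p$ and order them as $t_{p_1}<\dots<t_{p_N}$, where $N=m^d$. Consecutive visits lie in different cubes, so the curve segment between them has length at least its $\infty$-displacement. This displacement is at least the gap $2\delta$, because the Euclidean norm dominates the $\infty$-norm and the wrap-around distance is at most the length of the segment. Summing over the $N-1$ consecutive pairs, and adding the closing arc since $\gamma$ is closed, gives
\begin{align*}
\ell(\gamma)\ge N\cdot 2\delta\ge 2\delta\,(8\delta)^{-d}=2^{1-3d}\delta^{1-d},
\end{align*}
which is exactly the claimed constant. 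The case $N=1$ cannot occur, since $m\ge2$.

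\emph{Main obstacle.} The delicate part is the bookkeeping in Step 3 rather than any hard estimate. One must justify that the wrap-around distance is bounded by the arc length, and one must use the closedness of $\gamma$ to gain the final term, turning $N-1$ into $N$. Alternatively, one can drop the closing arc and absorb the loss via the slack between $m$ and $1/(8\delta)$. One also has to check that the floor in the definition of $m$ still gives $m\ge 1/(8\delta)$; this uses $1/(4\delta)\ge 2$, which is where the hypothesis $\delta\le1/8$ enters.
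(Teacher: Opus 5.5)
Your proof is correct and is essentially the paper's packing argument. The paper partitions $\T^d$ into cubes of side $1/m$ with $m=\lfloor 1/(2\delta)\rfloor$ and keeps every other one, which gives $\lfloor m/2\rfloor^d\ge(8\delta)^{-d}$ pairwise $2\delta$-separated cubes that the closed curve must visit; your $\delta$-cubes around a grid of spacing $1/\lfloor 1/(4\delta)\rfloor$ give the same count and constant, and you handle the wrap-around and the closing arc more explicitly than the paper does. One caveat: your aside that the closing arc can be dropped and absorbed into the slack is false for $d=1$ (for $\delta\in(1/12,1/8)$ you get $m=2$ and $N-1=1<1/(8\delta)$), so the closedness of $\gamma$ is genuinely needed there.
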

\begin{proof}
 We set $m=\lfloor{1/(2\delta)}\rfloor$ and use the partition
 \begin{align*}
  [0,1)^d=\bigcup_{k\in\{0,\hdots,m-1\}^d} Q_k,\qquad Q_k=\frac1m \left(k + [0,1)^d\right).
 \end{align*}
 The curve must visit every closed cube $\overline{Q_k}$ by the definition of the covering radius. Now we consider only the even-numbered cubes $k=(k_1,\hdots,k_d)\in\N^d$, $k_1,\hdots,k_d\in2\N$. The Euclidean distance of two even-numbered cubes is at least $2\delta$ and thus
 \begin{align*}
  \ell(\gamma)
  \ge 2\delta\left\lfloor{\frac{m}{2}}\right\rfloor^d
  \ge 2\delta\left(\frac{1}{4\delta}-1\right)^d
  \ge 2\delta \left(\frac{1}{8\delta}\right)^d,
 \end{align*}
 where the last bound uses $\delta\le 1/8$.
\end{proof}

\begin{corollary}\label{cor:Tschak2}
 If a curve $\gamma\colon[0,1]\to\T^d$ with covering radius at most $\delta_{\Gamma}\le 1/8$ and a non-negative weight function $w\colon\Gamma\to[0,\infty)$ allows for exact integration for all $f\in\Pi_{2n}^d$, i.e.,
 \begin{align*}
  \frac{1}{\ell(\gamma)}\int_{\gamma} f(x) w(x) \d \sigma(x)=\int_{\T^d} f(x)\d x,
 \end{align*}
 then the length of the curve is bounded from below by
 \begin{align*}
     \ell(\gamma)\ge \frac{1}{2^{d(d+1)}} \cdot (2n+1)^{d-1}.
 \end{align*}
\end{corollary}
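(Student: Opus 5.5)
The plan is to reduce the curve quadrature to a discrete positive quadrature via Tschakaloff's theorem, then combine \cref{thm:deltaT} with \cref{lem:deltaL}.

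\textbf{Step 1 (discretization).} Consider the finite positive measure $\mu=\frac{1}{\ell(\gamma)}w\,\sigma|_{\Gamma}$ on $\T^d$. Its support is contained in the compact set $\Gamma$. The space $\Pi_{2n}^d$ is finite dimensional; we may pass to real and imaginary parts, i.e.\ real trigonometric polynomials. Tschakaloff's theorem \cite[Thm.~1.24]{Sc17} then yields finitely many nodes $x_j\in\supp\mu\subset\Gamma$ and weights $w_j>0$ with
\begin{align*}
\sum_j w_j f(x_j)=\int f\d\mu=\int_{\T^d}f(x)\d x
\end{align*}
for all $f\in\Pi_{2n}^d$. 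Positivity of $w$ is exactly what guarantees that the weights $w_j$ are positive.

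\textbf{Step 2 (covering radius).} By \cref{thm:deltaT}, the node set $X=\{x_j\}$ has covering radius $\delta_X\le 2^{d-1}/(2n+1)$. Since $X\subset\Gamma$, we get $\delta_\Gamma\le\delta_X$, so
\begin{align*}
\delta_\Gamma\le \frac{2^{d-1}}{2n+1}.
\end{align*}

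\textbf{Step 3 (length).} The hypothesis $\delta_\Gamma\le 1/8$ lets us apply \cref{lem:deltaL}, which gives $\ell(\gamma)\ge 2^{1-3d}\delta_\Gamma^{1-d}$. Because $1-d\le 0$, an upper bound on $\delta_\Gamma$ gives a lower bound on $\delta_\Gamma^{1-d}$:
\begin{align*}
\ell(\gamma)\ge 2^{1-3d}\,2^{-(d-1)^2}(2n+1)^{d-1}=2^{-d^2-d}(2n+1)^{d-1},
\end{align*}
using $1-3d-(d-1)^2=-d^2-d$. This is the claimed bound.

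\textbf{Main obstacle.} The only delicate point is Step 1. Tschakaloff's theorem must be applied so that the nodes lie in $\Gamma$ rather than merely in $\T^d$; this holds because the atoms lie in the support of $\mu$. One also has to handle the case where $w$ vanishes on parts of $\Gamma$. There the nodes lie only in $\supp(w\sigma)\subseteq\Gamma$, which is still sufficient because the inequality $\delta_\Gamma\le\delta_X$ only needs $X\subset\Gamma$. Everything else is direct substitution.
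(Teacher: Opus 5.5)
Your proposal is correct and follows the paper's proof: you apply Tschakaloff's theorem to get a positive discrete quadrature with nodes on $\Gamma$, then combine \cref{thm:deltaT} with the monotonicity $\delta_\Gamma\le\delta_X$, and finish with \cref{lem:deltaL}. The exponent computation $1-3d-(d-1)^2=-d(d+1)$ and the remarks on real parts and atom locations only make explicit what the paper leaves implicit.
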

\begin{proof}
 Tschakaloff's theorem, see, e.g., \cite[Thm.~1.24]{Sc17}, ensures the existence of a finite set $X=\{x_j\in\Gamma:j=1,\hdots,m\}$ of points and positive weights $w_j>0$ such that
 \begin{align*}
   \frac{1}{\ell(\gamma)}\int_{\gamma} f(x) w(x) \d \sigma(x)
   =\sum_j w_j f(x_j)
 \end{align*}
 for all $f\in\Pi_{2n}^d$.
 Now of course, this does not decrease the covering radius, i.e., $\delta_X\ge\delta_{\Gamma}$.
 Applying \cref{thm:deltaT} and \cref{lem:deltaL} yields the result.
\end{proof}

Recently \cite{EhGr26}, the above reasoning in \cite[last part of the proof of Thm.~2.2]{EhGr23} has been greatly improved by avoiding the detour via the covering radius and we use these arguments to give the following lower bound on the length of a curve that integrates trigonometric polynomials exactly.

\begin{theorem}\label{thm:Improv}
 Under the conditions of \cref{cor:Tschak2}, we have
 \begin{align*}
     \ell(\gamma)
     \ge \frac{0.94}{\sqrt{2d+1}} \cdot (2n+1)^{d-1}.
 \end{align*}
\end{theorem}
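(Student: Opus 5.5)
The plan is to follow \cite{EhGr26} and avoid the lossy detour through \cref{lem:deltaL}. We do not just count how many cubes the curve visits. Instead we bound from below the length of $\gamma$ inside each cell of a fine grid, and then sum over the cells. As in \cref{cor:Tschak2}, we first replace the weighted curve integral by a positive point rule on $\Gamma$ via Tschakaloff's theorem. This shows that every region on which a sign-localized test function in $\Pi_{2n}^d$ with positive integral is non-positive must contain a point of $\Gamma$.

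\textbf{Step 1 (which sets the curve must hit).} We need a sharper hitting statement than \cref{thm:deltaT}. For every $x_0\in\T^d$, the test function $g(\cdot-x_0)$ from \cref{thm:deltaT}, or a one-parameter variant of it, is non-positive outside an explicit neighbourhood $U(x_0)$ of $x_0$. We take $U(x_0)$ to be a Euclidean ball of radius $\rho/(2n+1)$ rather than a cube. The aim is to get $\rho$ of order $\sqrt{2d+1}$ instead of the crude $2^{d-1}$. For this we use the $\ell_2$ structure of $D_n(x)=\prod_i D_n(x_i)$: a product bound of the type $\prod_i |D_n(x_i)|/(2n+1)\le \exp(-c\|x\|_2^2(2n+1)^2)$ near the origin, together with the tail bound of \cref{lem:DFn} away from it. So $\Gamma$ meets every ball of radius $\rho/(2n+1)$.

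\textbf{Step 2 (length inside one cell).} Fix the grid of side $h=1/(2n+1)$. We place balls $B(x_0,\rho h)$ at suitably spaced centres $x_0$, for instance two antipodal centres at distance $2\rho h$ on either side of a cell. Because $\Gamma$ meets each of these balls, $\gamma$ either has to travel between them or leave the enlarged cell. Projecting onto a coordinate direction, or using the $\sqrt d$ comparison between $\ell_2$-length and the sum of coordinate variations, then gives a length of at least $c\,h$ of $\gamma$ inside a slightly enlarged cell. Here $c$ is a dimension-dependent quantity of the form $1/\sqrt{2d+1}$.

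\textbf{Step 3 (summation and optimization).} The enlarged cells overlap with bounded multiplicity. Summing the per-cell lengths over all $(2n+1)^d$ cells, dividing by this multiplicity, and multiplying by $h=(2n+1)^{-1}$ gives $\ell(\gamma)\ge c'(2n+1)^{d-1}$. Finally we optimize the free parameter in the test function, which trades the radius $\rho$ against the positivity of $\int g$, to reach the numerical constant $0.94$.

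The main obstacle is Step 1: producing a band-limited sign-localized test function whose non-positivity region has Euclidean scale $\sqrt{2d+1}/(2n+1)$. The tensor-product kernel in \cref{thm:deltaT} only gives cube-shaped control with exponential loss. I would first try $(D_n(x)-\tau(2n+1)^d)F_n(x)$ with $\tau$ tuned, estimating $D_n$ via the Gaussian-type bound on $\prod_i\sin((2n+1)\pi x_i)/((2n+1)\sin\pi x_i)$. If that does not give the constant $0.94/\sqrt{2d+1}$, I would instead replace $D_n$ by a squared Fejér kernel and compute its second moment $\int\|x\|_2^2F_n(x)\d x$ explicitly.
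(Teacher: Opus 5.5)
Your Steps 2 and 3 use nothing but the conclusion of Step 1, namely that $\Gamma$ meets every Euclidean ball of radius $\rho h$ with $h=\frac{1}{2n+1}$. At the scale you aim for, that information cannot give a constant of order $1/\sqrt d$. So you have not avoided the covering-radius detour of \cref{cor:Tschak2}; you have only rephrased it.

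A hitting property at scale $\rho h$ allows $\Gamma$ to miss every cell within distance almost $\rho h$ of a given point. The length you attribute to a cell must then be collected in a region of side about $2\rho h$, and the overlap multiplicity in Step 3 is of order $(2\rho)^d$, not bounded. Better bookkeeping cannot repair this. By the proof of \cref{thm:gamma2}, every point of $\T^d$ lies within $\ell^\infty$-distance $\frac{1}{2(2m+1)}$ of a point of $\gamma_{d,m}$ in the same hyperplane $\{x_d=\mathrm{const}\}$, hence within Euclidean distance $\frac{\sqrt{d-1}}{2(2m+1)}$. Take $2m+1$ to be the smallest odd integer $\ge\frac{2n+1}{2\sqrt2}$. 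Then this curve meets every ball of radius $\frac{\sqrt{2d+1}}{2n+1}$, so anything derived from Step 1 alone applies to it. Yet its length is at most $\frac{3}{\sqrt8}(2m+1)^{d-1}\approx\frac{3}{\sqrt8}(2\sqrt2)^{1-d}(2n+1)^{d-1}$, which is below $\frac{0.94}{\sqrt{2d+1}}(2n+1)^{d-1}$ for every $d\ge2$ and large $n$.

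Step 1 itself also fails for your test function. Positivity $\int_{\T^d}(D_n-\tau(2n+1)^d)F_n\d x>0$ forces $\tau<(\frac{n+1}{2n+1})^d\approx2^{-d}$. Along a coordinate axis, however, the positive side lobes of $D_n(x)/(2n+1)^d$ near $x_1=\frac{t}{2n+1}$ have height about $\frac{1}{\pi t}$. So $g\le0$ outside the ball forces $\rho\gtrsim2^d/\pi$. This is why \cref{thm:deltaT} carries the factor $2^{d-1}$; a Gaussian-type bound only controls the main lobe. The squared-Fej\'er alternative is only named, not carried out.

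The missing idea, which is the paper's proof, is to use exactness to bound from above how much weighted curve measure a small box centred on $\Gamma$ can carry, rather than to show that $\Gamma$ hits small sets. For $f\in\Pi_n^d$ we have $|f|^2\in\Pi_{2n}^d$, so exactness and $w\ge0$ give $\frac{1}{\ell(\gamma)}\int_\gamma\mathds{1}_S w\d\sigma\cdot\min_S|f|^2\le\|f\|_2^2$ for every set $S$.

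Take $S=x_0+\frac{1}{2n+1}[-\nu,\nu]^d$ with $\nu\le\frac12$ and $f=D_n(\cdot-x_0)$. If $x_0\in\Gamma$, then since $\Gamma\not\subset S$, the closed curve leaves $S$ in both directions from the centre, so it has length at least $\frac{2\nu}{2n+1}$ inside $S$. In the weighted case, averaging over $x_0\in\Gamma$ yields a centre whose weighted measure of $S$ is at least this. Translate to $x_0=0$. Using $\|D_n\|_2^2=(2n+1)^d$ and the fact that $\min_S|D_n|$ is attained at a corner of $S$, you get $\ell(\gamma)\ge\frac{2\nu}{(2n+1)^{d+1}}\min_S|D_n|^2\ge2\nu(2n+1)^{d-1}(1-\pi^2\nu^2/6)^{2d}$. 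Choosing $\nu\asymp1/\sqrt d$ gives the $1/\sqrt d$ behaviour. Only one box and only the main lobe of $D_n$ enter, so nothing exponential appears.

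The exponent here is $2d$, whereas the paper's display writes $d$. Optimizing with the exponent $2d$ gives $\nu^2=\frac{6}{\pi^2(4d+1)}$ and the constant $\frac{2\sqrt6\,\e^{-1/2}}{\pi\sqrt{4d+1}}\ge\frac{0.94}{\sqrt{4d+1}}$. So even this route delivers $\sqrt{4d+1}$, not the $\sqrt{2d+1}$ in the statement.
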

\begin{proof}
 In the recent work \cite{EhGr26}, upper bounds on the quadrature-measure of small sets as well as lower bounds on the length of the curves allowing an $L^2$-Marcinkiewicz-Zygmund (MZ) inequality are proven.
 We exemplify their main arguments for curves $\Gamma\subset\T^d$: If a quadrature is exact of degree $2n$ and following \cite[Sect.~4.1]{EhGr26}, then in particular 
 \begin{align*}
     \frac{1}{\ell(\gamma)}\int_{\gamma} \mathds{1}_{S}(x) w(x)\d\sigma(x) \cdot \min_{x\in S}|f(x)|^2 \le 
     \frac{1}{\ell(\gamma)}\int_{\gamma}|f(x)|^2 w(x) \d\sigma(x)=\|f\|_2^2
 \end{align*}
 holds for all $\sigma$-measurable sets $S\cap\Gamma\subset \Gamma$ and $f\in\Pi_n^d$ (where $\mathds{1}_{S}$ denotes the indicator funtion of the set $S$).
 For simplicity and comparability, we assume $\delta_{\Gamma}<1/8$. Choosing $x_0\in\Gamma$ and $\nu\in(0,\frac12]$, the small box $S=S(x_0)=x_0+\frac{1}{2n+1}[-\nu,\nu]^d$ does not contain the curve completely, $\Gamma\not\subset S$.
 Hence, the curve enters the box $S$, passes through its center $x_0$, and leaves the box $S$ again. By the pigeonhole principle/probabilistic method, this yields at least one point $x_0\in\Gamma$ such that also in the weighted case
 \begin{align*}
  \int_{\gamma} \mathds{1}_{S}(x) w(x)\d\sigma(x)\ge \frac{2\nu}{2n+1}.   
 \end{align*}
 Using the Dirichlet kernel as test function and assume by translation invariance once more $x_0=0$, we get
 \begin{align*}
     \ell(\gamma)
     &\ge
     \frac{\min_{x\in S}|D_n(x)|^2}{\|D_n\|_2^2}\int_{\gamma} \mathds{1}_{S}(x)\d\sigma(x)
     \ge
     \frac{2\nu}{(2n+1)^{d+1}} \left|\frac{\sin\pi\nu}{\sin\frac{\pi \nu}{2n+1}}\right|^{2d}\\
     &\ge (2n+1)^{d-1}\cdot 2\nu(1-\pi^2\nu^2/6)^d
 \end{align*}
 where the Dirichlet kernel attains its minimum in the 'corner' $x=\frac{\nu}{2n+1}(1,\hdots,1)$ and we use $x\ge\sin x\ge x(1-x^2/6)$.
 The last factor gets maximal for $\nu= \sqrt{6/\pi^2 (2 d + 1)}$ leading to
 \begin{align*}
     \ell(\gamma)
     \ge (2n+1)^{d-1}\cdot \frac{2\sqrt{6}\left(1-\frac{1}{2d+1}\right)^d}{\pi\sqrt{2d+1}}
     \ge \frac{0.94}{\sqrt{2d+1}} \cdot (2n+1)^{d-1}.
 \end{align*}
\end{proof}

\section{A simple curve and variants}\label{sec:simple}

Let $d,n\in\N$ and define the curve $\gamma=\gamma_{d,n}\colon[0,1]\to\T^d\simeq [0,1)^d$,
\begin{align}\label{eq:gamma}
    y\mapsto \gamma(y)=\gamma_{d,n}=\left(y,(2n+1)y,\hdots,(2n+1)^{d-1} y\right) \mod 1,
\end{align}
see \cref{fig:gamma} for an illustration. This might be viewed as a continuous analogue of a Korobov rank-1 lattice.
\begin{figure}[h!]
 \includegraphics[width=0.49\textwidth]{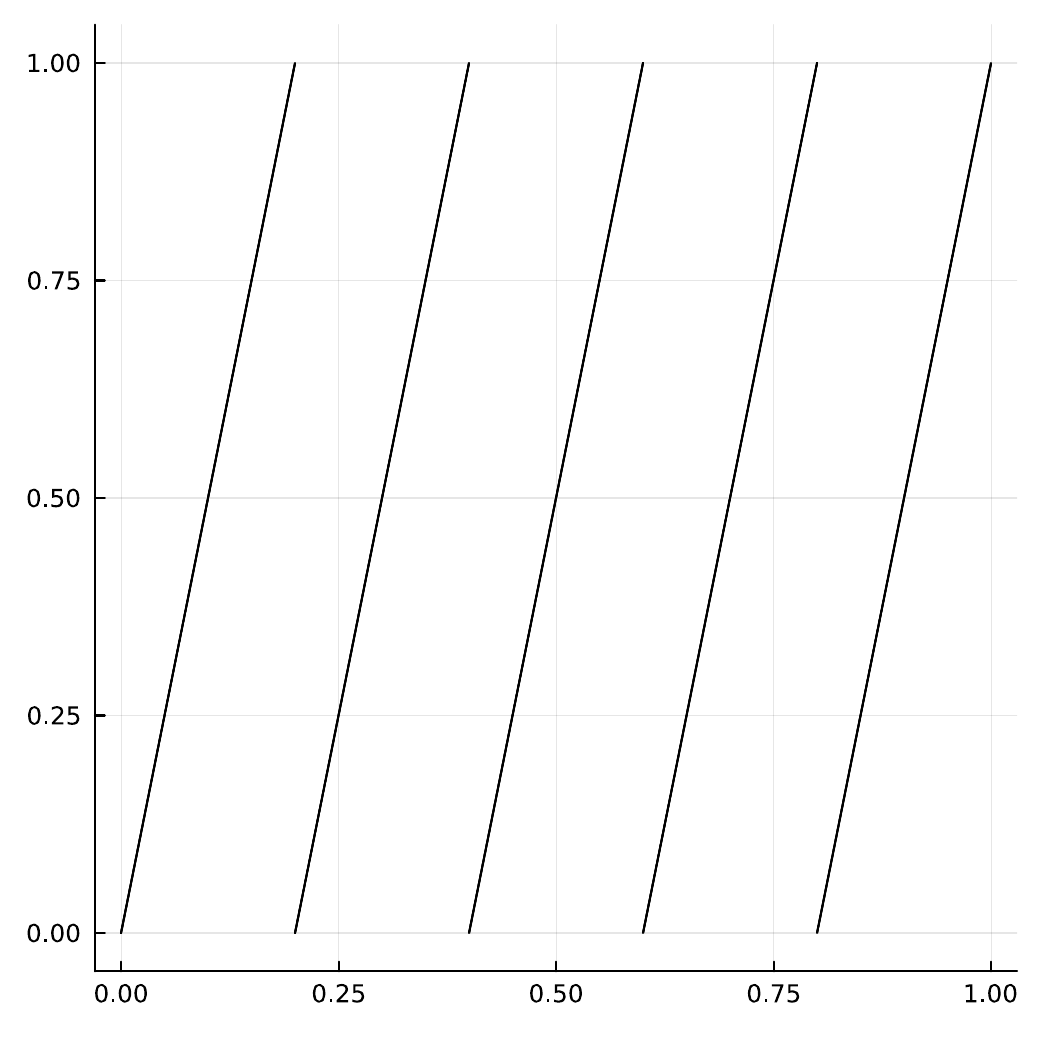}
 \includegraphics[width=0.49\textwidth]{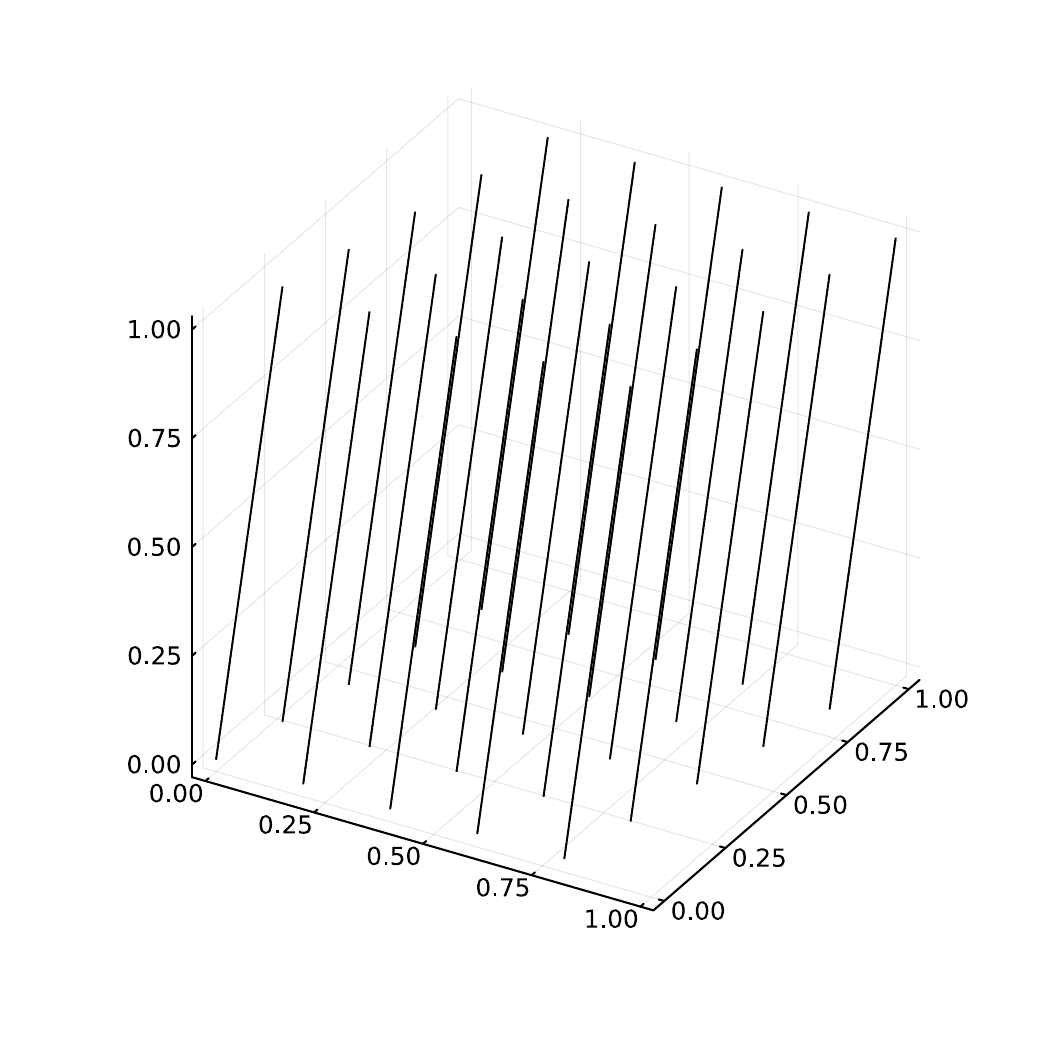}
 \caption{The curve \eqref{eq:gamma} for $n=2$ and $d=2$ (left) and $d=3$ (right).}\label{fig:gamma}
\end{figure}

\begin{theorem}\label{thm:gamma1}
 Let $d,n\in\N$, $f\in\Pi_n^d$, and $\gamma=\gamma_{d,n}$ be the curve \eqref{eq:gamma}, then
\begin{align}\label{eq:intEq}
    \frac{1}{\ell(\gamma)}\int_\gamma f(x)\d \sigma(x) = \int_{\T^d} f(x)\d x. 
\end{align}    
\end{theorem}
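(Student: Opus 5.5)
By linearity of both sides of \eqref{eq:intEq} in $f$, it suffices to verify the identity for a single monomial $f(x)=\eip{kx}$ with $\|k\|_\infty\le n$. For such $k$ the right-hand side equals $1$ if $k=0$ and $0$ otherwise. So the whole proof reduces to showing that the normalized curve integral of $\eip{kx}$ vanishes for every $k\ne0$ in this range.

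First I will handle the arc length. The curve $\gamma(y)=(y,(2n+1)y,\hdots,(2n+1)^{d-1}y)$ is, modulo $1$, a straight line with constant velocity
\begin{align*}
\dot\gamma(y)=\bigl(1,(2n+1),\hdots,(2n+1)^{d-1}\bigr).
\end{align*}
Its speed $\|\dot\gamma\|_2$ is therefore a constant $L$, and $\ell(\gamma)=L$. Reducing modulo $1$ does not affect the values of $1$-periodic functions. Hence
\begin{align*}
\frac{1}{\ell(\gamma)}\int_\gamma f\d\sigma=\int_0^1 f(\gamma(y))\d y .
\end{align*}
The curve is closed because $\gamma(1)=\gamma(0)$ mod $1$, and any self-intersections are irrelevant, since the integral is defined through the parametrization.

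Next, for $f=\eip{kx}$ we get $f(\gamma(y))=\eip{m y}$ with
\begin{align*}
m=\sum_{j=1}^d k_j(2n+1)^{j-1}\in\Z,
\end{align*}
so $\int_0^1 f(\gamma(y))\d y$ equals $1$ if $m=0$ and $0$ otherwise. The key step, and the only genuine obstacle, is to show that $m=0$ forces $k=0$ when $|k_j|\le n$. This is uniqueness of the balanced base-$(2n+1)$ representation with digit set $\{-n,\hdots,n\}$. The map $k\mapsto m$ sends $\{-n,\hdots,n\}^d$ bijectively onto $\{-((2n+1)^d-1)/2,\hdots,((2n+1)^d-1)/2\}$.

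To prove $m=0\Rightarrow k=0$ directly, I will argue by induction on the lowest index. Reducing $m$ modulo $2n+1$ gives $k_1\equiv0 \pmod{2n+1}$, and $|k_1|\le n$ then forces $k_1=0$. Dividing by $2n+1$ and repeating the argument yields $k_2=\hdots=k_d=0$. This completes the reduction, and thus $f\circ\gamma$ is a univariate trigonometric polynomial whose mean equals that of $f$.
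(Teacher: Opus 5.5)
Your proof is correct and follows essentially the same route as the paper: reduce to the exponentials $e_k$, observe that $e_k\circ\gamma=e_m$ with $m=\sum_{j} k_j(2n+1)^{j-1}$, and conclude from $m\ne0$ for $k\ne0$. You also prove $m\ne0$ via the base-$(2n+1)$ digit argument and make the constant-speed normalization explicit, both of which the paper leaves implicit.
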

\begin{proof}
 We prove the assertion by integrating the standard basis of trigonometric polynomials. Obviously, we have for $e_0(x)=1$ the equality
 \begin{align*}
    \frac{1}{\ell(\gamma)}\int_\gamma 1 \d \sigma(x)
    =1= \int_{\T^d} 1 \d x.
\end{align*}
Now let $k\in \{-n,\hdots,n\}^d$, $k\ne 0$, then
\begin{align*}
    e_k(\gamma(y))
    =\exp\left(2\pi\ii\left(k_1 y + k_2 (2n+1) y +\hdots+k_d (2n+1)^{d-1} y \right)\right)
    = e_m(y)
\end{align*}
 with $m=k_1+(2n+1)k_2+\hdots+(2n+1)^{d-1} k_d\ne 0$ and hence
 \begin{align*}
    \frac{1}{\ell(\gamma)}\int_\gamma e_k (x) \d \sigma(x)
    =\int_0^1 e_k (\gamma(y)) \d y
    =\int_0^1 e_m(y) \d y
    =0 = \int_{\T^d} e_k(x) \d x 
\end{align*}
\end{proof}

\begin{remark}(Optimality with respect to degree, see \cite[Thm.~2]{BoDeVi17})
Let $F\colon\left(\C\setminus\{0\}\right)^d\to\C$,
\begin{align*}
 F(X)=\sum_{\|k\|_\infty\le n} c_k X^k=\sum_{k_1=-n}^n\dots\sum_{k_d=-n}^n c_{k_1,\hdots,k_d} X_1^{k_1}\cdot\hdots\cdot X_d^{k_d},
\end{align*}
denote a multivariate Laurent polynomial of degree $n$ in each variable, then $\tilde F\colon\C\to\C$,
\begin{align*}
 \tilde F(Y)=F\left(Y,Y^{2n+1},Y^{(2n+1)^2}\hdots Y^{(2n+1)^{d-1}}\right),
\end{align*}
is a univariate Laurent polynomial of degree $(2n+1)^{d-1} n$.
This reduction technique is sometimes denoted as Kronecker substitution.
Restricting to the complex unit circle, we get the multivariate and univariate trigonometric polynomials
\begin{align*}
 f(x)=f(x_1,\hdots,x_d)=F(\eip{x_1},\hdots,\eip{x_d}), \quad
 \tilde f(y)=\tilde F(\eip{y})
\end{align*}
of degree $n$ in each variable and of degree $(2n+1)^{d-1} n$, respectively.
We have $\dim \Pi_n^d=\dim \Pi_{(2n+1)^{d-1}n}^1=(2n+1)^d$ and hence 
\begin{align*}
    \tilde f=\arg\min_{\gamma} \deg f\circ \gamma,
\end{align*}
where the minimum is taken over all $\gamma$ such that \eqref{eq:intEq} holds (and the degree is infinity by convention if $f\circ\gamma$ is no trigonometric polynomial).
\end{remark}

The curve \eqref{eq:gamma} is also quasi-optimal with respect to covering radius and length as the following discussion shows.
\begin{lemma}\label{thm:gamma2}
 Let $d,n\in\N$, $d\ge 2$, then the curve \eqref{eq:gamma} has length
 \begin{align*}
    (2n+1)^{d-1}\le
    \ell(\gamma)
    =\left(\frac{(2n+1)^{2d}-1}{(2n+1)^2-1}\right)^{\frac12}
    \le \frac{3}{\sqrt{8}} (2n+1)^{d-1}
\end{align*}
and covering radius
\begin{align*}
    \frac{1}{4n+4} \le \delta = \frac{(2n+1)^{d-2}}{2(2n+1)^{d-1}+2} \le \frac{1}{4n+2}.
\end{align*}
\end{lemma}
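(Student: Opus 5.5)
The length is the easy part. By \eqref{eq:gamma}, $\dot\gamma(y)=(1,N,\hdots,N^{d-1})$ is constant, where I write $N=2n+1\ge 3$. Hence
\begin{align*}
 \ell(\gamma)=\Bigl(\sum_{k=0}^{d-1}N^{2k}\Bigr)^{1/2}=\Bigl(\frac{N^{2d}-1}{N^2-1}\Bigr)^{1/2}.
\end{align*}
The lower bound $N^{d-1}$ comes from keeping only the last summand. For the upper bound I factor out $N^{2d-2}$ and bound the remaining geometric series by $1/(1-N^{-2})\le 9/8$, using $N\ge 3$. Taking square roots gives $\frac{3}{\sqrt8}N^{d-1}$.

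For the covering radius I first describe the trajectory implicitly:
\begin{align*}
 \Gamma=\{x\in\T^d: x_{k+1}\equiv N x_k \bmod 1,\ k=1,\hdots,d-1\},
\end{align*}
since $x_1=y$ determines all other coordinates. A point $x$ is within wrap-around $\infty$-distance $\delta$ of $\Gamma$ if and only if there is a $y$ with $|N^{k-1}y-x_k|\le\delta$ for all $k$. Going from the last coordinate backwards, each constraint confines $y$ to a union of intervals of length $2\delta/N^{k-1}$ with period $1/N^{k-1}$. These constraints interact through the nesting of the periods.

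The two-sided computation is then the following.
\begin{itemize}
 \item \emph{Upper bound.} Given any $x$, I choose $y$ greedily, starting from an admissible residue for $x_d$. At each earlier coordinate I shift $y$ by a multiple of $1/N^{k}$, which leaves all later coordinates unchanged modulo $1$. This bounds the error in coordinate $k$ by half the grid spacing minus the slack already allowed. Summing these contributions gives a condition of the form $2\delta(N+N^{-(d-2)})\ge 1$, which is equivalent to $\delta\ge N^{d-2}/(2N^{d-1}+2)$.
 \item \emph{Lower bound.} I exhibit an explicit worst point: $x_1$ in the middle between two strands, with the remaining coordinates placed so that every direction of escape costs the same. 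Then I check that all $y$ violate at least one constraint at slightly smaller $\delta$.
 \item \emph{Sanity check.} For $d=2$ this is transparent: $\Gamma$ consists of $N$ parallel segments of slope $N$ with horizontal spacing $1/N$. A point midway between two segments moving diagonally gives $\delta+\delta/N=1/(2N)$, i.e.\ $\delta=1/(2N+2)$, matching the formula.
\end{itemize}
I expect the main obstacle to be making the general-$d$ induction clean, most likely by induction on $d$ via the projection that drops the first coordinate.

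The final inequalities are elementary. $\delta\le 1/(2N)=1/(4n+2)$ is immediate from $2N^{d-1}+2>2N^{d-1}$. The bound $\delta\ge 1/(2N+2)=1/(4n+4)$ is equivalent to $N^{d-2}(2N+2)\ge 2N^{d-1}+2$, i.e.\ $2N^{d-2}\ge 2$, which holds for $d\ge 2$.
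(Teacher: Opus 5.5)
Your length computation is correct and is the paper's. The covering-radius part has a gap that cannot be closed, because the exact value you set out to prove is false for every $d\ge 3$. In particular, the upper-bound half of your argument (the greedy choice of $y$ ending in $2\delta(N+N^{-(d-2)})\ge 1$) must fail.

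\emph{A counterexample.} Let $d=3$, $n=1$, so $N=3$ and the claimed value is $3/20$, and let $x=(0,0.39,0.535)$. If $\operatorname{dist}(y,\Z)\le 0.15$, take the representative $y\in[-0.15,0.15]$. Then $3y-0.39\in[-0.84,0.06]$, so $\operatorname{dist}(3y-0.39,\Z)\le 0.15$ forces $y\in[0.08,0.15]$. But then $9y-0.535\in[0.185,0.815]$ has distance at least $0.185$ from $\Z$. Hence $|x-\gamma(y)|_\infty>3/20$ for all $y$.

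\emph{What your bookkeeping misses.} The coordinates $x_2,\dots,x_d$ can be chosen one after the other so that the admissible set of $y$ stays a \emph{single} interval. At the $k$-th constraint, place the window $N^k\cdot(\text{current interval})$ so that it starts just after a good interval $[-\delta,\delta]+\Z$ ends; the interval then loses almost $(1-2\delta)N^{-k}$ of its length. These losses accumulate over all intermediate coordinates, not only between the first and the last one. The admissible set becomes empty once $2\delta<(1-2\delta)\sum_{k=1}^{d-1}N^{-k}$, which gives
\begin{align*}
 \delta\ \ge\ \frac{N^{d-1}-1}{2(N^{d}-1)}\ >\ \frac{N^{d-2}}{2N^{d-1}+2}\qquad (d\ge 3).
\end{align*}
The strict inequality reduces to $N^{2d-2}-1>N^{2d-2}-N^{d-2}$. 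For $d=3$ a short case analysis shows that $\frac{N+1}{2(N^2+N+1)}$ is the exact value ($2/13$ rather than $3/20$ for $n=1$). Only for $d=2$, your sanity check, is the closed form right.

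The paper's own argument reaches the same formula through an analogous oversight. Sliding the $(d-1)$-cubes along the curve moves them in the single direction $(1,N,\dots,N^{d-2})$, which sweeps out a sheared zonotope, not a cube enlarged by $\delta/N^{d-1}$ in every coordinate. So no cleaner induction will rescue the exact value.

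What is true, and all that the subsequent quasi-optimality corollary needs, is $\frac{1}{4n+4}\le\delta\le\frac{1}{4n+2}$. Prove this directly rather than from the closed form.

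\emph{Upper bound.} Use the points $\gamma(k/N^d)$, $0\le k<N^d$. If $k\ne 0$ has lowest nonzero base-$N$ digit $k_j$, then the $(d-j)$-th coordinate of $\gamma(k/N^d)$ equals $k_j/N$ modulo $1$. By linearity these points are therefore pairwise at wrap-around $\ell^\infty$-distance at least $1/N$. The $N^d$ closed cubes of half-width $1/(2N)$ around them thus have disjoint interiors and total volume $1$, hence cover $\T^d$.

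\emph{Lower bound.} Project onto the last two coordinates. This maps $\Gamma$ onto the planar curve $\{(u,Nu) \bmod 1\}$ and cannot increase $\ell^\infty$-distances. Your $d=2$ computation then yields a point of $\T^d$ at distance at least $1/(2N+2)$ from $\Gamma$.
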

\begin{proof}
We compute the length of this curve by
\begin{align*}
    \ell(\gamma)
    =\int_{\gamma}\d\sigma(x)
    =\int_0^1 \|\dot\gamma(y)\|_2\d y
    =\left(\sum_{r=0}^{d-1} (2n+1)^{2r}\right)^{\frac12}
    =\left(\frac{(2n+1)^{2d}-1}{(2n+1)^2-1}\right)^{\frac12}.
\end{align*}
The lower bound follows by considering the last summand, $r=d-1$, in the next to last term.
The upper bound is obtained via
\begin{align*}
 \frac{(2n+1)^{2d}-1}{(2n+1)^2-1}
 =(2n+1)^{2d-2}\left(\frac{(2n+1)^{2}-(2n+1)^{2-d}}{(2n+1)^2-1}\right)
 \le (2n+1)^{2d-2}\left(\frac{(2n+1)^{2}}{(2n+1)^2-1}\right)
\end{align*}
and the very last term is decreasing with $n$ and bounded by $\frac{9}{8}$ for $n\ge 1$.

Regarding the covering radius, we start by proving the upper bound. The points
\begin{align}\label{eq:lattice}
 \gamma\left(\frac{k}{(2n+1)^{d}}\right),\qquad k=0,\hdots,(2n+1)^{d}-1,
\end{align}
form a rank-1 lattice in $\T^{d}$. Placing a cube (square for $d=2$) with radius $\frac{1}{2(2n+1)}$ at every point gives a partition of $\T^{d}$. The covering radius of the curve is upper bounded by the covering radius of this subset of points.

The same upper bound can be obtained, by considering the hyperplane $\T^{d-1}\times\{0\}$: The points on the curve with last coordinate being zero, i.e.,
\begin{align*}
 \gamma\left(\frac{k}{(2n+1)^{d-1}}\right),\qquad k=0,\hdots,(2n+1)^{d-1}-1,
\end{align*}
form the rank-1 lattice \eqref{eq:lattice} with $d$ replaced with $d-1$ in $\T^{d-1}\times\{0\}$.
Hence, placing a $(d-1)$-cube (line segment for $d=2$, square for $d=3$) with radius $\frac{1}{2(2n+1)}$ at every point gives a partition of $\T^{d-1}$. Now moving these $(d-1)$-dimensional cubes along the curve also covers $\T^d$.

During this movement, the $d$-cube placed at the origin also covers points in the hyperplane $\T^{d-1}\times\{0\}$ with $\ell^\infty$-distance larger than $\frac{1}{2(2n+1)}$. We replace the $d$-cube of radius $\frac{1}{2(2n+1)}$ by one with smaller radius $\delta$.
When moving the center point of the cube along the curve up to the last coordinate being $\delta$, this cube still intersects with the 
hyperplane $\T^{d-1}\times\{0\}$ and the first coordinate (as well as all others) are increased by (at least) $\delta/(2n+1)^{d-1}$. Hence, the covering radius fulfils
\begin{align*}
 \frac{1}{2(2n+1)}=\delta+\frac{1}{(2n+1)^{d-1}}\delta.
\end{align*}
Solving for $\delta$ yields the result.
\end{proof}

\begin{corollary}[Quasi-Optimality]
 Let $n,d\in\N$ and $n$ be even.
 The curve \eqref{eq:gamma} is quasi-optimal in the sense that it integrates $f\in\Pi_n^d$ exactly and its exact geometric quantities fulfil 
 \begin{align*}
  \delta&\ge\frac{1}{4}\cdot\frac{1}{n+1}
  &&\text{and}&
  \ell(\gamma)\le \frac{3}{\sqrt{8}}\cdot(2n+1)^{d-1}
 \intertext{and thus are within a dimension dependent multiple of the necessary bounds}
 \delta&\le2^{d-1}\cdot\frac{1}{n+1}
  &&\text{and}&
  \ell(\gamma)
     \ge \frac{0.94}{\sqrt{2d+1}} \cdot (n+1)^{d-1}.
 \end{align*}
\end{corollary}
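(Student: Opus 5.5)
The corollary only assembles earlier results, so the plan is to check that each inequality comes from the right statement after re-indexing the degree. Since $n$ is even, write $n=2m$. Then $\Pi_n^d=\Pi_{2m}^d$ and $2m+1=n+1$. Any positive (generalized) quadrature exact on $\Pi_n^d$ is therefore exact on $\Pi_{2m}^d$. This is the re-indexing that lets us apply \cref{thm:deltaT}, \cref{cor:Tschak2} and \cref{thm:Improv}, which are all stated for exactness of degree $2m$.

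\textbf{Exactness and the achieved quantities.} \cref{thm:gamma1} shows that the curve \eqref{eq:gamma} integrates every $f\in\Pi_n^d$ exactly, with the constant weight $w\equiv 1$. The two stated properties of $\gamma$ come from \cref{thm:gamma2} with its parameter $n$ as given. The length bound $\ell(\gamma)\le \frac{3}{\sqrt8}(2n+1)^{d-1}$ is copied verbatim. The covering radius satisfies $\delta\ge \frac{1}{4n+4}=\frac14\cdot\frac1{n+1}$. For $d=1$ the curve is the whole circle, so $\delta=0$, and I would either note this separately or restrict to $d\ge2$ as in \cref{thm:gamma2}.

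\textbf{Necessary bounds.} For the covering radius, I would apply \cref{thm:deltaT} through Tschakaloff's theorem, exactly as in the proof of \cref{cor:Tschak2}. This replaces the curve quadrature by a finite positive rule with nodes on $\Gamma$, and its covering radius is at least $\delta_\Gamma$. \cref{thm:deltaT} with degree $2m$ then gives
\[
\delta\le \frac{2^{d-1}}{2m+1}=2^{d-1}\cdot\frac{1}{n+1}.
\]
For the length, I would apply \cref{thm:Improv} with $m$ in place of $n$. This yields
\[
\ell(\gamma)\ge \frac{0.94}{\sqrt{2d+1}}(2m+1)^{d-1}=\frac{0.94}{\sqrt{2d+1}}(n+1)^{d-1}.
\]

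\textbf{Main obstacle.} The only delicate point is that \cref{thm:Improv} inherits the hypothesis $\delta_\Gamma\le 1/8$ from \cref{cor:Tschak2}. I expect this to be checkable directly. By \cref{thm:gamma2}, $\delta\le \frac{1}{4n+2}\le\frac18$ as soon as $n\ge 2$, and even $n$ means $n\ge 2$. Alternatively, \cref{thm:deltaT} forces $\delta$ small once $m$ is large. Comparing the two sides, the ratio of the upper to the lower bound for $\delta$ is $2^{d+1}$. The ratio for the length is
\[
\frac{3\sqrt{2d+1}}{0.94\sqrt8}\left(\frac{2n+1}{n+1}\right)^{d-1}\le C\sqrt{d}\,2^{d-1},
\]
so both ratios are dimension-dependent constants, independent of $n$. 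This is the quasi-optimality claim.
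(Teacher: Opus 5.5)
Your proposal is correct and follows the paper's own proof. The paper proves the corollary simply by citing \cref{thm:gamma2} for the achieved quantities, and \cref{thm:deltaT} (via Tschakaloff, as in \cref{cor:Tschak2}) together with \cref{thm:Improv} at degree $n=2m$ for the necessary bounds. Your two extra checks are also right and repair omissions in the statement: $\delta_\Gamma\le 1/(4n+2)<1/8$ for even $n\ge 2$ verifies the hypothesis of \cref{thm:Improv}, and the lower bound on $\delta$ fails for $d=1$ (where $\Gamma=\T$ and $\delta=0$), so $d\ge 2$ must be assumed as in \cref{thm:gamma2}.
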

\begin{proof}
 See \cref{thm:gamma2}, \cref{thm:deltaT}, and \cref{thm:Improv} (the latter two for $\Pi_n^d$).
\end{proof}

\begin{example}
 For $d=2$, \cref{thm:gamma2} and direct calculation yields
 \begin{align*}
     \delta&=\frac14\cdot\frac{1}{n+1}
     &&\text{and}&
     \ell(\gamma)
     &=\sqrt{(2n+1)^2+1}
     \le 2\cdot (n+1).
 \intertext{If $n$ is even, the upper bound from \cref{thm:deltaT} gives $\delta\le 2/(n+1)$, which can be improved to $\delta\le 1/(n+1)$ by \cref{rem:DFn} ii).
 Finally, ignoring the very last estimate in the proof of \cref{thm:Improv} yields the following lower bound on the length:}
     \delta&\le \frac{1}{n+1}
     &&\text{and}&
     \ell(\gamma)
     &\ge \frac{32\sqrt{6}}{25\pi\sqrt{5}}\cdot(n+1)\ge 0.44\cdot(n+1).
 \end{align*}
\end{example}

\subsection{A minor generalization}
For notational convenience, we consider the two-dimensional case $d=2$.
We slightly generalize \eqref{eq:gamma} for coprime $p,q\in\N$ to
$\gamma\colon[0,1]\to\T^2\simeq [0,1)^2$,
\begin{align*}
    t\mapsto \gamma(t)=\left(pt,qt\right) \mod 1.
\end{align*}
Along the lines of the previous section, we have $\ell(\gamma)=\sqrt{p^2+q^2}$ and
 $e_k(\gamma(t))=\exp(2\pi i (k_1 p t +k_2 q t)=e_m(t)$ with $m=k_1p+k_2q$. 
 In particular, we have $m\ne 0$ for $|k_1|<q$ and $|k_2|<p$ unless $k=0$ and thus
\begin{align}\label{eq:gammapq}
 \frac{1}{\ell(\gamma)}\int_{\gamma} e_k \d\sigma=\begin{cases}
 1 & k=0,\\
 0 & k\ne 0,\,|k_1|<q,\,|k_2|<p.
 \end{cases}
\end{align}
Implicitly, this curve has been used quite heavily in different applications. When discretized to the lattice points $\gamma(\frac{j}{pq})$, $j=0,\hdots,pq-1$, it reduces a one-dimensional discrete Fourier transform of size $pq$ to a two-dimensional discrete Fourier transform of size $p\times q$, cf.~\cite{Go58}.

\subsection{A variant on the two-sphere}\label{sec:sphere}
 The standard parameterization of the sphere is given by $\gamma_2\colon[0,\pi]\times[0,2\pi)\to\S^2$, 
 $(\theta,\varphi)\mapsto(\sin\theta\cos\varphi,\sin\theta\sin\varphi,\cos\theta)$,
 and integrating on the sphere reads as
 \begin{align*}
     \int_{\S^2} f(x) \d\omega(x)
     =\int_0^{2\pi} \int_0^\pi f(\theta,\varphi) \sin\theta \d\theta\d\varphi.
 \end{align*}

 Let $p,q\in\N$, $2p,q$ coprime, and consider the curve $\gamma_1\colon[0,1]\to [0,\pi]\times[0,2\pi)$,
\begin{align}\label{eq:gammaS2}
    t\mapsto \gamma_1(t)=\left(\theta(t),\varphi(t)\right)=\left(2\pi pt \mod \pi,2\pi qt \mod 2\pi\right).
\end{align}

\begin{figure}[h!]
 \includegraphics[width=0.32\textwidth]{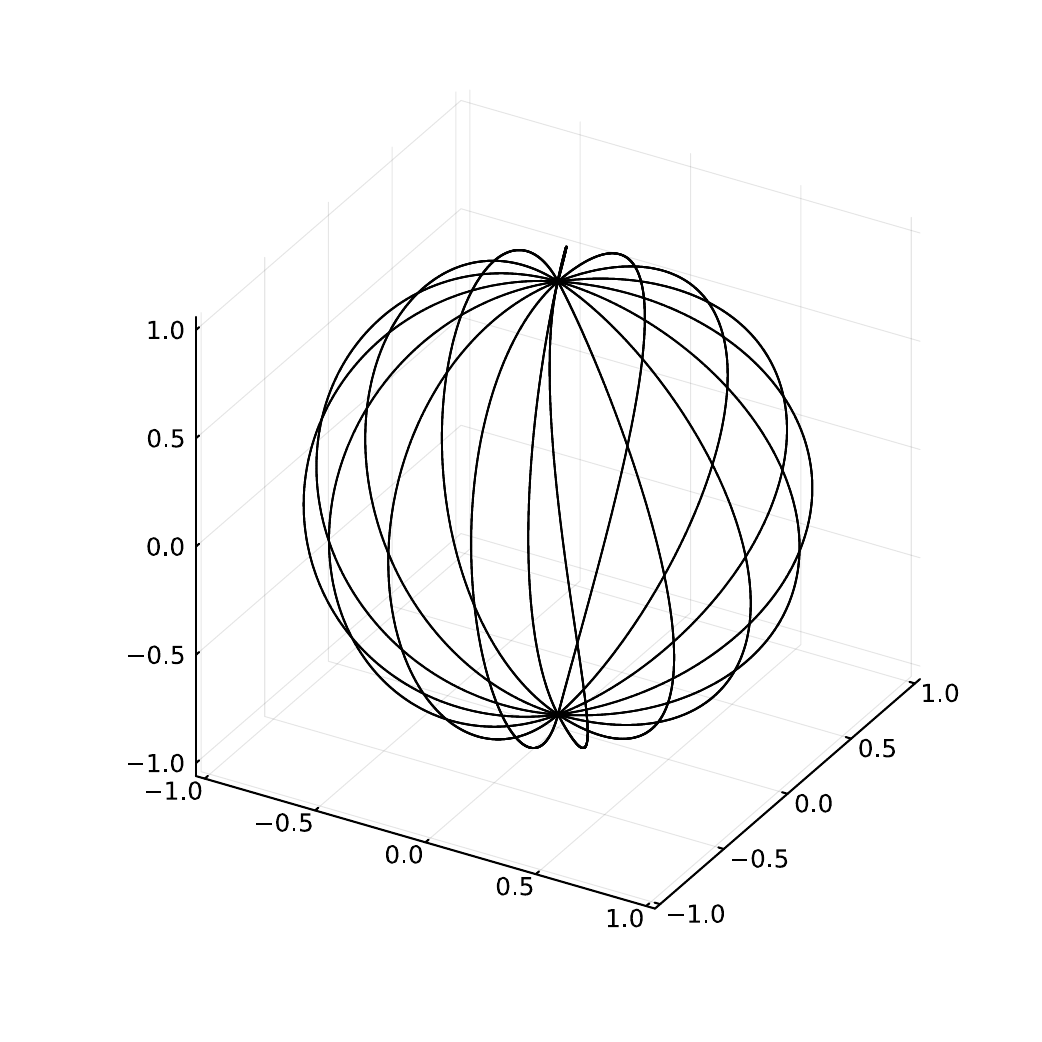}
 \includegraphics[width=0.32\textwidth]{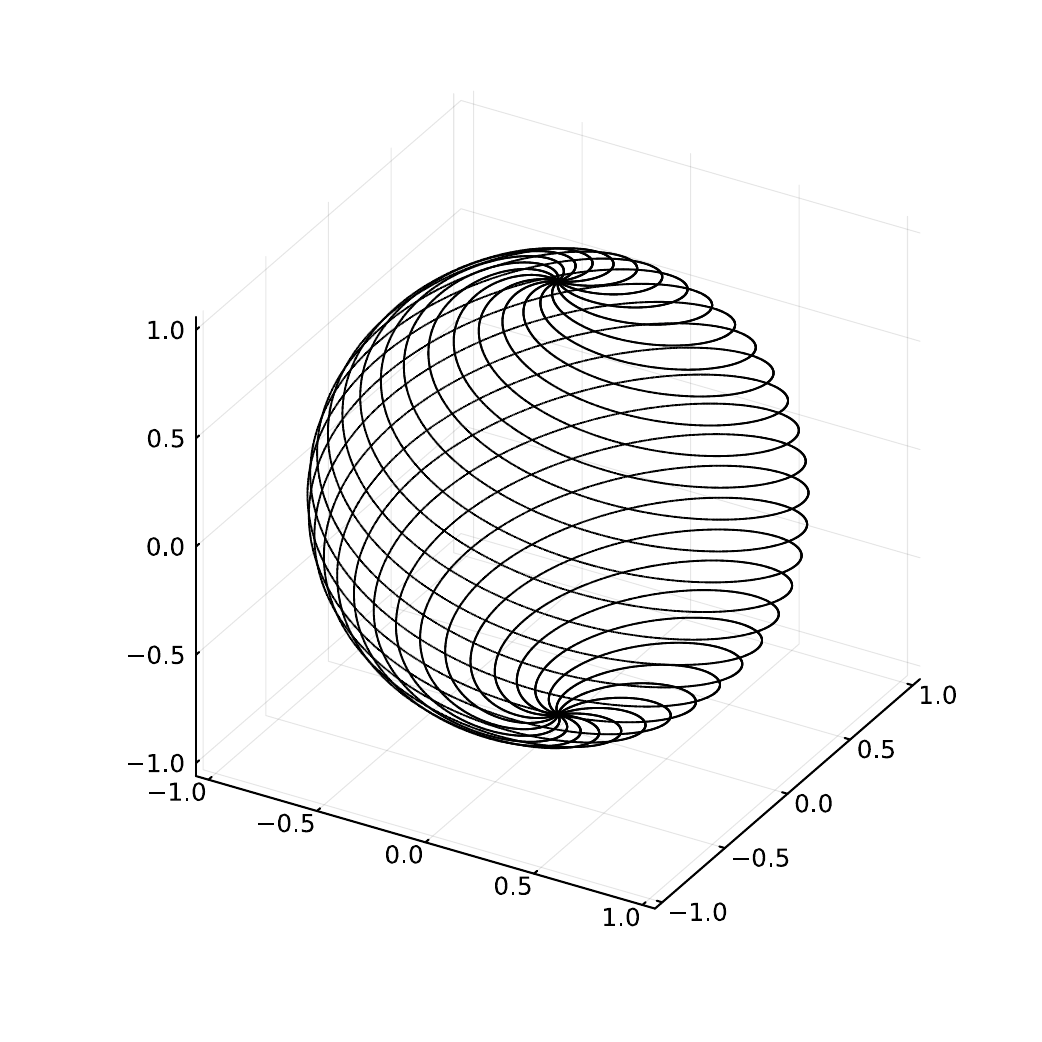}
 \includegraphics[width=0.32\textwidth]{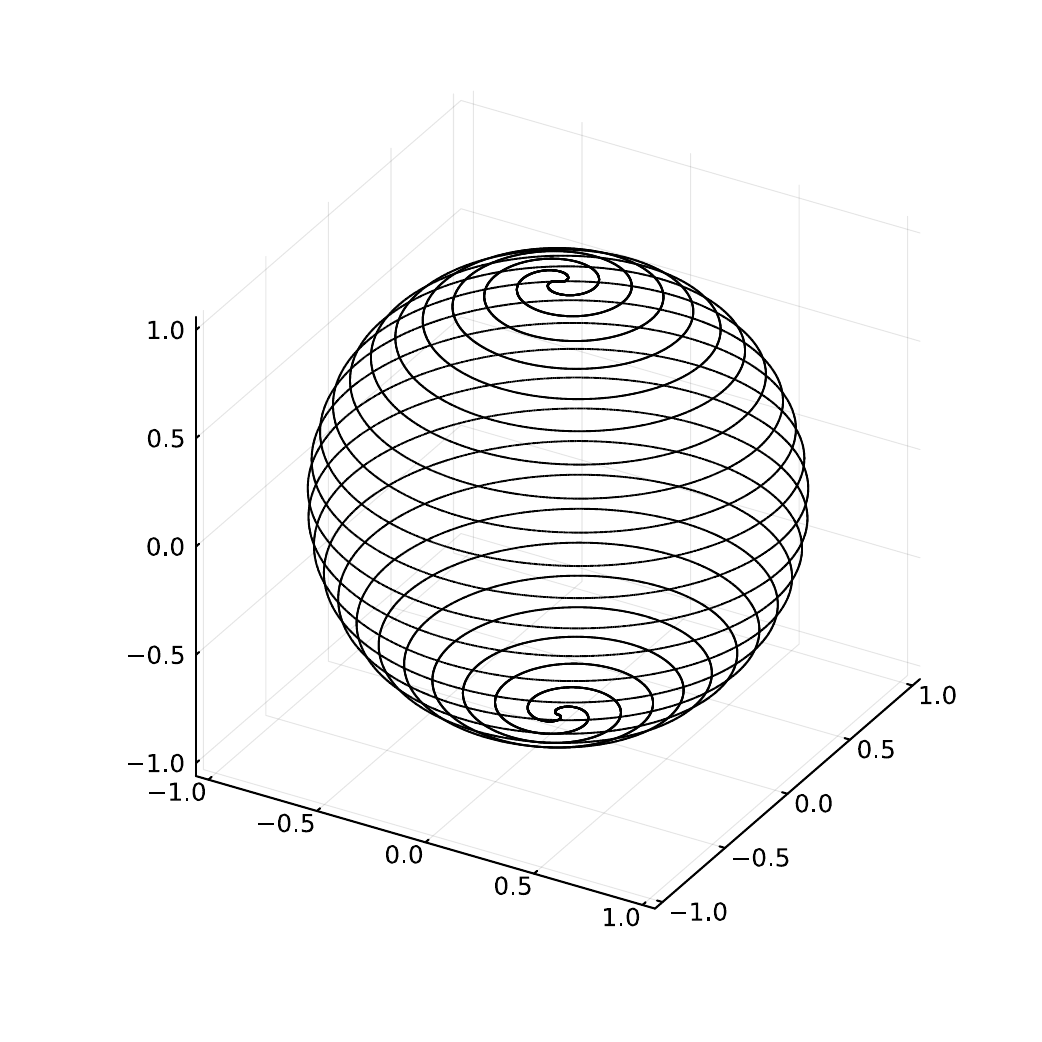}
 \caption{Curves with $p=8$, $q=1$ (left), $p=8$, $q=23$ (middle), and $p=1$, $q=23$ (right).}\label{fig:Sd}
\end{figure}

 Now consider the composition $\gamma:[0,1]\to\S^2$, $\gamma=\gamma_2\circ\gamma_1$, which can be viewed as $2p$ curve segments on the sphere by definition (jumping from the south to the north pole by $\mod \pi$), $p$ closed smooth curves by 'glueing' together, e.g., $t\searrow 0$ and $t\nearrow \frac12$, or one closed curve with $2p$ kinks.
 Direct computation yields
 \begin{align*}
     \ell(\gamma)
     =\int_\gamma \d \sigma(x)
     =\int_0^1 \|\dot\gamma(t)\|_2\d t
     =2\pi\int_0^1 \sqrt{p^2+q^2\sin^2(2\pi p t)}\d t
     \le 2\pi\sqrt{p^2+q^2},
 \end{align*}
 where the next to last term is a so-called elliptic integral.
 It turns out useful, to include a weight by defining the measure in cartesian coordinates $x=(x_1,x_2,x_3)\in\S^2$ by
 \begin{align*}
  \d\tilde\sigma(x)=w(\arccos{x_3})\d\sigma(x),\qquad
  \tilde w(\theta):=\sqrt{\frac{(p^2+q^2)\sin^2\theta}{p^2+q^2\sin^2\theta}}.
 \end{align*}
 The weight function is bounded by $\sin\theta\le \tilde w(\theta)\le \sin\theta\sqrt{1+q^2/p^2}$ where the lower bound is attained for $\theta=\pi/2$ and the upper bound is attained for $\theta=0$ and $\theta=\pi$.
 This new measure is normalized to
 \begin{align*}
  C_{\gamma}=\int_{\gamma}\d\tilde\sigma(x)=2\pi\int_0^1 \sqrt{(p^2+q^2)\sin^2(2\pi pt)}\d t
  =4\sqrt{p^2+q^2}.
 \end{align*}

\begin{theorem}
 We denote the standard orthogonal basis of spherical harmonics in colatitude-longitude parameterization by $Y^k_l\colon [0,\pi]\times[0,2\pi) \to\C$,
 \begin{align}\label{eq:Ykl}
     Y^k_l(\theta,\varphi)=\e^{i k\varphi} P^k_l(\cos\theta),\qquad l\in\NZ,k=-l,\hdots,l,
 \end{align}
 where $P^k_l$ denotes the so-called associated Legendre-functions. 
 Now, let $n\in \NZ$ and $f=\sum_{l=0}^n\sum_{k=-l}^l \hat f^k_l Y^k_l$ be a spherical harmonic polynomial of degree at most $n$.
 For $p,q\in\N$, $2p,q$ coprime and $p>n$, we have
    \begin{align*}
     \frac{1}{4\pi}\int_{\S^2} f(x) \d\omega(x)
     =\frac{1}{C_{\gamma}}\int_\gamma f(x) \d\tilde\sigma(x)
 \end{align*}
 with the curve and measure defined above.
\end{theorem}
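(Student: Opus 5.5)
The plan is to compute the weighted curve integral explicitly in the parameter $t$ and to verify the identity for each basis function $Y^k_l$ with $0\le l\le n$. The claim then follows by linearity.

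\textbf{Step 1: the weighted line element.} Along $\gamma=\gamma_2\circ\gamma_1$ we have $\|\dot\gamma(t)\|_2 = 2\pi\sqrt{p^2+q^2\sin^2\theta(t)}$, since $\theta(t)=2\pi pt \bmod \pi$ and $\varphi(t)=2\pi qt \bmod 2\pi$. Multiplying by the weight $\tilde w(\theta(t))$ cancels the elliptic factor:
\begin{align*}
 \d\tilde\sigma = 2\pi\sqrt{p^2+q^2}\,\sin\theta(t)\d t .
\end{align*}
Here $\sin\theta(t)\ge 0$ because $\theta(t)\in[0,\pi)$. With $C_\gamma=4\sqrt{p^2+q^2}$ this gives
\begin{align*}
 \frac{1}{C_\gamma}\int_\gamma f\d\tilde\sigma=\frac{\pi}{2}\int_0^1 f(\theta(t),\varphi(t))\sin\theta(t)\d t .
\end{align*}

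\textbf{Step 2: removing the reduction $\bmod\pi$.} The reduction $\bmod\pi$ in $\theta$ cannot be dropped directly, because $\cos(\theta\bmod\pi)\neq\cos(2\pi pt)$ in general. Instead, split $[0,1)$ into the $2p$ intervals $[j/(2p),(j+1)/(2p))$, $j=0,\hdots,2p-1$. On each interval substitute $t=(s+j)/(2p)$ with $s\in[0,1)$, which gives $\theta=\pi s$ and $\varphi=\pi q(s+j)/p$. For $f=Y^k_l$ this yields
\begin{align*}
 \frac{1}{C_\gamma}\int_\gamma Y^k_l\d\tilde\sigma
 =\frac{\pi}{4p}\int_0^1 P^k_l(\cos\pi s)\sin(\pi s)\,\e^{ik\pi q s/p}\d s\cdot\sum_{j=0}^{2p-1}\e^{2\pi i kqj/(2p)} .
\end{align*}

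\textbf{Step 3: the geometric sum and the case $k\neq 0$.} The geometric sum equals $2p$ if $2p\mid kq$ and vanishes otherwise. Since $\gcd(2p,q)=1$, the case $2p\mid kq$ forces $2p\mid k$. Since $|k|\le l\le n<p$, this is only possible for $k=0$. Hence the curve integral vanishes for $k\ne0$. This matches the sphere side, where the $\varphi$-integral of $\e^{ik\varphi}$ over $[0,2\pi)$ is zero.

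\textbf{Step 4: the zonal case $k=0$.} The factor $\e^{ik\pi qs/p}$ disappears and the sum equals $2p$, so the curve side becomes
\begin{align*}
 \frac{\pi}{2}\int_0^1 P_l(\cos\pi s)\sin\pi s\d s=\frac12\int_0^\pi P_l(\cos\theta)\sin\theta\d\theta .
\end{align*}
On the sphere side, $\frac{1}{4\pi}\cdot 2\pi\int_0^\pi P_l(\cos\theta)\sin\theta\d\theta$ gives the same value. Both equal $\frac12\int_{-1}^1P_l(x)\d x=\delta_{l,0}$ for the standard normalization $P^0_l=P_l$ used in \eqref{eq:Ykl}.

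\textbf{Main obstacle.} Steps 3 and 4 are routine once the substitution is in place. The two places where errors can slip in are both in Steps 1 and 2. The first is to verify that $\tilde w$ exactly cancels the elliptic factor $\sqrt{p^2+q^2\sin^2\theta}$ with the correct constant $2\pi\sqrt{p^2+q^2}$, which makes the normalization come out as $\pi/2$. The second is to handle the reduction $\theta\bmod\pi$ by the splitting into $2p$ pieces rather than treating $f\circ\gamma$ as a trigonometric polynomial in $t$. The hypothesis $p>n$ enters only through the divisibility argument in Step 3.
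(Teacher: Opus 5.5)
Your proof is correct and follows essentially the paper's argument: you split $[0,1)$ into the $2p$ pieces on which $\theta$ runs once through $[0,\pi)$, kill the terms with $k\neq 0$ by the vanishing geometric sum (using $\gcd(2p,q)=1$ and $0<|k|\le n<p$), and reduce the zonal terms to $\int_0^\pi P_l(\cos\theta)\sin\theta\d\theta$. The only cosmetic differences are that in the case $k=0$ you match both sides directly instead of invoking Legendre orthogonality, and that your explicit substitution gives the correct factor $1/(2\pi p)$ where the paper writes $2\pi p$, a harmless slip there since that integral is zero.
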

\begin{proof}
  We prove the assertion for the basis of spherical harmonics.
  For $k=l=0$, we have
 \begin{align*}
     \int_\gamma Y^0_0(x) \d\tilde\sigma(x)
     =\int_\gamma \d\tilde\sigma(x)
     =C_\gamma
     \quad\text{and}\quad
     \int_{\S^2} \d\omega(x)=
     \int_0^{2\pi} \int_0^\pi \sin\theta \d\theta\d\varphi=4\pi.
 \end{align*}
 For $k=0$, $l\ne 0$, we argue by orthogonality of the Legendre polynomials and have
 \begin{align*}
     \int_\gamma Y^0_l(x) \d\tilde\sigma(x)
    &=2\pi\sqrt{p^2+q^2}
      \int_0^1 P_l(\cos(2\pi p t \mod \pi)) \sin(2\pi p t \mod \pi) \d t\\
    &=2\pi\sqrt{p^2+q^2} \sum_{r=0}^{2p-1}
      \int_{\frac{r}{2p}}^{\frac{r+1}{2p}} P_l(\cos(2\pi p t \mod \pi)) \sin(2\pi p t \mod \pi) \d t\\
    &=2\pi\sqrt{p^2+q^2} \sum_{r=0}^{2p-1}
        2\pi p \int_0^\pi P_l(\cos x) \sin x \d x=0.
 \end{align*}

 For $k\ne 0$, we argue by discrete orthogonality of the exponential functions, abbreviate $h_l(t)=P_l(\cos(2\pi p t \mod \pi)) \sin(2\pi p t \mod \pi)$, note that $h_l(t+r/2p)=h_l(t)$, $r\in\Z$, and thus have
\begin{align*}
     \int_\gamma Y^k_l(x) \d\tilde\sigma(x)
    &=2\pi\sqrt{p^2+q^2} \sum_{r=0}^{2p-1}
      \int_{\frac{r}{2p}}^{\frac{r+1}{2p}} h_l(t) \e^{2\pi ikqt} \d t\\
    &=2\pi\sqrt{p^2+q^2} \int_{0}^{\frac{1}{2p}} h_l(t) \e^{2\pi ikqt} \sum_{r=0}^{2p-1} \e^{2\pi ikq  \frac{r}{2p}} \d t=0
 \end{align*}
 where the sum in the last line is zero since $2p$ and $q$ are coprime and $|k|<p$.
\end{proof}

\begin{remark}
 According to \cite[Thm.~5.1 and Cor.~5.2]{EhGr26}, a generalized quadrature being exact of degree $2n$, $n\in\N$, $n\ge 2$, implies $\ell(\gamma)\ge n/2$.
 The curve \eqref{eq:gammaS2} with parameters $q=3$ and prime $p\in (2n,4n)$ yields
 \begin{align*}
     \ell(\gamma)\le 2\pi\sqrt{p^2+q^2}\le 8\pi n,
 \end{align*}
 making it optimal up to a factor. This also gives a simple proof for the \emph{weighted} version of \cite[Thm.~1.2]{EhGr23}.
\end{remark}

\section*{Acknowledgement}
Fruitful discussions with Matthias Reitzner and Marcin Wnuk on geometric aspects, in particular on \cref{lem:deltaL}, have been highly appreciated.

\bibliographystyle{abbrvurl}
\bibliography{refs}

\end{document}